\documentclass[reqno]{amsart}
\usepackage{hyperref}
\usepackage{amsmath,amsthm,amssymb}
\usepackage{mathrsfs}
\usepackage{bbm}   
\usepackage{graphicx}

\usepackage{paralist}

\usepackage[all]{xy}
\input xy
\xyoption{all}
\xyoption{arc}
\xyoption{line}
\CompileMatrices

\newcommand{\beq}{\begin{displaymath}}
\newcommand{\eeq}{\end{displaymath}}
\newcommand{\beqn}{\begin{equation}}
\newcommand{\eeqn}{\end{equation}}
\newcommand{\beqa}{\begin{eqnarray*}}
\newcommand{\eeqa}{\end{eqnarray*}}
\newcommand{\beqna}{\begin{eqnarray}}
\newcommand{\eeqna}{\end{eqnarray}}

\newcommand{\re}[1]{~(\ref{#1})}

\newcommand{\N}{\mathbb{N}}

\newcommand{\R}{\mathbb{R}}
\newcommand{\C}{\mathbb{C}}
\renewcommand{\P}{\mathbb{P}}
\newcommand{\id}{\mathrm{id}}
\newcommand{\ra}{\rightarrow}

\newcommand{\lra}{\longrightarrow}


\numberwithin{equation}{section}

\newtheorem{prop}{Proposition}[section]
\newtheorem{thm}[prop]{Theorem}
\newtheorem{defn}[prop]{Definition}

\newtheorem{lem}[prop]{Lemma}

\theoremstyle{definition} 
\newtheorem{ex}[prop]{Example}
\newtheorem{rem}[prop]{Remark}

\setcounter{secnumdepth}{3}

\numberwithin{equation}{section}

\newcommand{\ind}[2]{\emph{#1}}

\title{Convex Spaces I: Definition and Examples}
\author{Tobias Fritz}
\email{tfritz@perimeterinstitute.ca}
\address{Max Planck Institute for Mathematics, Vivatsgasse 7, 53111 Bonn, Germany.}
\thanks{I would like to thank the Max Planck Institute for providing an excellent research environment and finanical support. Branimir \'Ca\'ci\'c and Jens Putzka provided helpful comments on a previous version of this paper. Marc Olschok, George Svetlichny and Klaus Keimel have kindly pointed out relevant literature.}

\begin{document}

\maketitle

\begin{abstract}
We propose an abstract definition of convex spaces as sets where one can take convex combinations in a consistent way. A priori, a convex space is an algebra over a finitary version of the Giry monad. We identify the corresponding Lawvere theory as the category from~\cite{pcsm} and use the results obtained there to extract a concrete definition of convex space in terms of a family of binary operations satisfying certain compatibility conditions. After giving an extensive list of examples of convex sets as they appear throughout mathematics and theoretical physics, we find that there also exist convex spaces that cannot be embedded into a vector space: semilattices are a class of examples of purely combinatorial type. In an information-theoretic interpretation, convex subsets of vector spaces are probabilistic, while semilattices are possibilistic. Convex spaces unify these two concepts.
\end{abstract}

\medskip

\tableofcontents

\emph{\textbf{Important Note}: The idea of abstract convexity is not original to this article, but has been rediscovered many times over. The original version of this manuscript, having appeared as a preprint in 2009, constitutes another one of these rediscoveries that was made in 2008 with intended application to an operational axiomatization of quantum mechanics.}

\emph{In citations, please refer to the original literature. Most importantly, this literature includes the following original works:}

\emph{\begin{itemize}
\item The idea of abstract convexity seems to originate with Stone~\cite{Stone}, whose \emph{barycentric calculus} axiomatizes convex subsets of vector spaces. Stone's axioms are very similar to our Definition~\ref{convspcdefn} together with a cancellation axiom.
\item Close to the categorical approach presented here is the paper of Neumann~\cite{Neumann}, where convex spaces are investigated from the perspective of universal algebra. Among other things, Neumann also describes the distinction between convex spaces satisfying a cancellation axiom (our \emph{geometric type}) and semilattices (our \emph{combinatorial type}), and also notes the existence of convex spaces of mixed type.
\item Our intended application of convex spaces was in an operational approach to the foundations of quantum mechanics. As it turns out, this has first been done by Gudder~\cite{Gudder1}.
\item The work of \'Swirszcz~\cite{Swi1,Swi2} develops an approach based on investigating categories of convex sets, the (non-)monadicity of the associated forgetful functors, and the algebras of the resulting monads. This completely subsumes the entire development of our Section~\ref{def}, including Definition~\ref{convspcdefn}, and also contains lots of results on categorical aspects of convex sets in a functional analytic context.
\end{itemize}
}

\bigskip

\emph{In conclusion, the current manuscript should be considered as secondary literature without original contributions. Its most useful aspect may be the collection of examples in Sections~\ref{examplesgeom} to~\ref{examplesmixed}.}

\bigskip

\section{Introduction}

Looking at the history of mathematics, one easily finds an abundance of cases where abstract generalizations of concrete structures into abstract concepts spurred a variety of interesting developments or even opened up completely new fields. Some of the most obvious examples that spring to mind are:\\

\begin{itemize}
\item The concept of a \emph{group}, which provides an abstract framework for the study of \emph{symmetries}.
\item \emph{Riemannian manifolds}, were modelled after submanifolds of $\R^n$ with their \emph{intrinsic geometry}.
\item \emph{Category theory}, conceived as an abstract framework for \emph{cohomology theories}.
\item \emph{Operators on Hilbert space}, which generalize the \emph{Fourier transform} and \emph{integral equations}.
\end{itemize}

We now consider the notion of \emph{convexity} as that property of a subset of a vector space that means that the set contains the line segment connecting every two points in that subset. Perhaps surprisingly, an abstract generalization has not (yet) been proposed for this concept of convexity. To the author's knowledge, the present literature does not contain any concept of abstract convex set that provides a nice notion of convex combinations for its elements. The aim of this paper is to remedy this omission. We note however that ideas similar to the ones presented here also appeared in the online discussion~\cite{Lei}, at about the same time as the present work started to take shape.

We shall call a set together with a certain notion of abstract convex combinations a \emph{convex space}. The most obvious examples are convex subsets of vector spaces. However, there is an entirely different class of convex spaces all of which are of a discrete nature, namely meet-semilattices, where the meet operation serves as a convex combination operation. Moreover, one can also construct examples of mixed type, where one has a semilattice as an underlying discrete structure, together with a convex subset of a vector space over each element of the semilattice. This is similar to how one can project a polytope onto its face lattice by mapping each point to the face it generates: then, the polytope becomes a ``fiber bundle'' over its face lattice with the face interiors as fibers. We describe a variant of this construction in~\cite{propclass} and show that every convex space is of such a form.

Our main motivation for studying this subject comes from quantum mechanics, in particular the search for the most general framework for theories of physics. Without loss of generality, we can assume a theory of physics to be of epistemological nature; this means that what we describe is not the actual reality of the system itself, but merely the information an observer has about the system. Now information is usually incomplete, in which case the state that the observer believes the system to be in is given by a statistical ensemble. Therefore, it seems reasonable to assume that the set of the information states has the mathematical structure of convex combinations, which correspond to statistical superpositions of ensembles. This is the framework known as \emph{general probabilistic theories}~\cite{Bar}, where the set of information states is taken to be a convex subset of a vector space. However since the underlying vector space lacks any physical motivation and solely serves the purpose of defining the convex combinations, we felt the need to develop an abstract concept of convex spaces.

We now give an outline of the paper. After settling notation in section~\ref{notation}, we start section~\ref{def} by proposing our definition of convex spaces in terms of a family of binary operations satisfying certain compatibility conditions. Using concepts from category theory, we then show that these compatibility conditions imply all the relations that we expect convex combinations to have. The main step relies on the results of~\cite{pcsm}. As a first exercise in the theory of convex spaces, we then show in theorem~\ref{convspc} how a convex space structure on a set is uniquely determined by the collection of those maps that preserve convex combinations.

The remaining three sections are entirely dedicated to various classes of examples. Section~\ref{examplesgeom} then proceeds by giving a list of examples of ``geometric type'', which refers to those convex spaces that can be written as a convex subset of a vector space. Then in section~\ref{examplescomb}, we study a discrete class of convex spaces. A discrete convex space in that sense turns out to be the same thing as a semilattice. None of these can be embedded into a vector space. Finally, section~\ref{examplesmixed} describes constructions of convex spaces that have both a geometric and a combinatorial flavor. This concludes the paper. We hope that the long list of examples explains why we deem convex spaces worthy of study.

\section{Notation}
\label{notation}
The $\texttt{typewriter font}$ denotes a category, for example $\mathtt{Set}$. As in~\cite{pcsm}, we write $[n]$ as shorthand for the $n$-element set $\{1,\ldots,n\}$. The symbol $\ast$ stands for any one-element set and also for the unique convex space over that set. For a real number $\alpha\in[0,1]$, we set $\overline{\alpha}\equiv 1-\alpha$. This notation increases readability in formulas involving binary convex combinations. The $\overline{\,\cdot\,}$ operation satisfies the important relations
\beq
\overline{\overline{\alpha}}=\alpha,\quad \overline{\alpha+\beta}=\overline{\alpha}+\overline{\beta}-1,\quad \overline{\alpha\beta}=\overline{\alpha}+\overline{\beta}-\overline{\alpha}\overline{\beta}.
\eeq
Given a set $X\in\mathtt{Set}$, we call
\beq
\Delta_X\equiv\left\{f:X \ra [0,1]\:\Bigg|\:f\textrm{ has finite support and }\sum_{x\in X}f(x)=1\right\}
\eeq
the \emph{simplex over $X$}. We also consider $\Delta_X$ as the set of all finite formal convex combinations $\sum_i\lambda_i\underline{x}_i$ with $x_i\in X$, where we use the underline notation $\underline{x}_i$ to emphasize that the sum is formal; this allows us to distinguish $x\in X$ from $\underline{x}\in\Delta_X$. Two formal convex combinations represent the same element of $\Delta_X$ if and only if they assign the same total weight to each element $x\in X$. 

\section{Defining convex spaces}
\label{def}

We first define convex spaces and convex maps before turning to a formal justification of these definitions and proving a certain uniqueness property of a convex space structure.

\begin{defn}
\label{convspcdefn}
A \ind{convex space}{convex space} is given by a set $\mathcal{C}$ together with a family of binary convex combination operations
\beq
cc_\lambda:\mathcal{C}\times\mathcal{C}\lra\mathcal{C},\quad\lambda\in[0,1]
\eeq
that satisfies\\
\begin{itemize}
\item The unit law:
\beqn
\label{unitlaw}
cc_0(x,y)=y
\eeqn
\item Idempotency:
\beqn
\label{idempotency}
cc_\lambda(x,x)=x
\eeqn
\item Parametric commutativity:
\beqn
\label{paramcomm}
cc_{\lambda}(x,y)=cc_{1-\lambda}(y,x)
\eeqn
\item Deformed parametric associativity:
\beqn
\label{defparaass}
cc_\lambda(cc_\mu(x,y),z)=cc_{\widetilde{\lambda}}(x,cc_{\widetilde{\mu}}(y,z))  
\eeqn
with\\
\beq
\widetilde{\lambda}=\lambda\mu,\qquad\widetilde{\mu}=\left\{\begin{array}{cl}\frac{\lambda\overline{\mu}}{\:\stackrel{}{\overline{\lambda\mu}}\:}&\textrm{ if }\lambda\mu\neq 1\\\textrm{arbitrary}&\textrm{ if }\lambda=\mu=1.\end{array}\right.
\eeq
\end{itemize}
\end{defn}

The most obvious example for this kind of structure is a vector space, with convex combinations defined via the vector space structure as $cc_\lambda(x,y)\equiv\lambda x+\overline{\lambda}y$. 

Definition~\ref{convspcdefn} is the picture of convex space that we shall work with. Usually, a convex space will be referred to simply by its underlying set $\mathcal{C}$, with the convex combination operations $cc_\lambda$ being implicit. Also, instead of $cc_\lambda(x,y)$, we will usually use the more suggestive notation
\beq
\lambda x+\overline{\lambda}y\equiv cc_\lambda(x,y)
\eeq
in which the laws\re{unitlaw}--\ref{defparaass} now read\\
\beqna
 0 x+\overline{0}y &=& y \\
 \lambda x + \overline{\lambda}x &=& x \\
 \lambda x + \overline{\lambda}y &=& \overline{\lambda}y+\overline{\overline{\lambda}}x \\
 \lambda\left(\mu x+\overline{\mu}y\right)+\overline{\lambda}z &=& \lambda\mu x+\overline{\lambda\mu}\left(\lambda\frac{\overline{\mu}}{\:\stackrel{}{\overline{\lambda\mu}}\:}y+\frac{\overline{\lambda}}{\:\stackrel{}{\overline{\lambda\mu}}\:}z\right)\quad (\lambda\mu\neq 1)
\eeqna

Also, we will occassionally use convex combinations
\beq
\sum_{i=1}^n\lambda_ix_i,\qquad \lambda_i\geq 0,\quad\sum_{i=1}\lambda_i=1
\eeq
of more than two elements. This are to interpreted as iterated binary convex combinations. Appropriate normalizations have to be inserted, e.g. for $n=3$,
\beq
\lambda_1x_1+\lambda_2x_2+\lambda_3x_3=\overline{\lambda}_3\left(\frac{\lambda_1}{\lambda_1+\lambda_2}x_1+\frac{\lambda_2}{\lambda_1+\lambda_2}x_2\right)+\lambda_3x_3.
\eeq
(Note that $\overline{\lambda}_3=\lambda_1+\lambda_2$.) Deformed parametric associativity\re{defparaass} then expresses the fact that this reduction to binary convex combinations does not depend on the order of bracketing.

\begin{defn}
\label{convexmap}
Given convex spaces $\mathcal{C}$ and $\mathcal{C}'$, a \ind{convex map}{convex map} from $\mathcal{C}$ to $\mathcal{C}'$ is a map $f:\mathcal{C}\ra\mathcal{C}'$ that commutes with the convex combination operations:
\beq
f(\lambda x+\overline{\lambda}y)=\lambda f(x)+\overline{\lambda}f(y).
\eeq
Convex spaces together with convex maps form the \ind{category of convex spaces}{category of convex spaces} $\mathtt{ConvSpc}$.
\end{defn}

For example, a map between vector spaces is convex if and only if it is affine. Therefore in this context, the words ``affine'' and ``convex'' will be used synonymously.

We now turn to the technical task of justifying these definitions. The goal here is to justify these definitions: why are the compatibility conditions\re{unitlaw} to\re{defparaass} sufficient to guarantee that the binary operations have all the properties we expect convex combinations to have? A less formally inclined reader may want to skip the remainder of this section.

So, what should a convex space formally be? Clearly, it has to be a set $\mathcal{C}$ together with some additional structure. This additional structure should make precise the intuition of an assignment
\beqn
\label{convcomb}
\mathfrak{m}:\Delta_\mathcal{C}\lra\mathcal{C},\qquad\sum_{i=1}^n\lambda_i\underline{x}_i\mapsto\sum_{i=1}^n\lambda_ix_i,
\eeqn
mapping a \emph{formal} convex combination $\left(\sum_{i=1}^n\lambda_i\underline{x}_i\right)\in\Delta_\mathcal{C}$ to an \emph{actual} convex combination $\left(\sum_{i=1}^n\lambda_ix_i\right)\in\mathcal{C}$, in such a way that the properties
\beqn
\label{convcombcon}
\mathfrak{m}(\underline{x})=x,\qquad\mathfrak{m}\left(\sum_{i=1}^n\lambda_i\,\underline{\mathfrak{m}\left(\sum_{j=1}^{m_i}\mu_{ij}\underline{x}_{ij}\right)}\,\right)=\mathfrak{m}\left(\sum_{i=1}^n\sum_{j=1}^{m_i}\lambda_i\mu_{ij}\underline{x}_{ij}\right)
\eeqn
hold. This intuition is straightforward to make precise using the theory of monads and their algebras\footnote{As pointed out by Leinster~\cite{Lei}, defining convex spaces in terms of an operad does not yield all properties that one desires; in particular, taking some convex combination of a point with itself would not necessarily give that point back. Therefore, defining them as algebras of a monad seems like the most canonical choice.}. The following definition is a discrete version of the Giry monad studied in categorical probability theory~\cite{Gir}.

\begin{defn}[the finitary Giry monad]
We define the simplex functor $\Delta$ to be given by
\beq
\Delta:\mathtt{Set}\ra\mathtt{Set},\quad \mathcal{C}\mapsto \Delta_\mathcal{C},\quad\left(\mathcal{C}\stackrel{f}{\ra}\mathcal{D}\right)\mapsto\left(\sum_i\lambda_i\underline{x}_i\mapsto\sum_i\lambda_i\underline{f(x_i)}\right).
\eeq
Then the \ind{finitary Giry monad}{finitary Giry monad} $\mathscr{G}_\mathrm{fin}=(\Delta,\eta,\mu)$ is defined by the unit natural transformation
\beq
\eta_\mathcal{C}:\mathcal{C}\ra \Delta_\mathcal{C},\quad x\mapsto\underline{x}
\eeq
and the multiplication transformation
\beq
\label{monadmult}
\mu_\mathcal{C}:\Delta_{\Delta_\mathcal{C}}\ra\Delta_\mathcal{C},\quad \sum_{i=1}^n\lambda_i\,\underline{\sum_{j=1}^{m_i}\mu_{ij}\underline{x}_{ij}}\mapsto\sum_{i=1}^n\lambda_i\sum_{j=1}^{m_i}\mu_{ij}\underline{x}_{ij}
\eeq
\end{defn}

An algebra of $\mathscr{G}_\mathrm{fin}$ is given by a set $\mathcal{C}$ together with a structure map $\mathfrak{m}:\Delta_\mathcal{C}\ra\mathcal{C}$, such that the diagrams
\beqn
\label{monadalgebra}
\vcenter{\xymatrix{{}\mathcal{C}\ar@{=}[rr]\ar[rd]_{\eta_\mathcal{C}} && {}\mathcal{C} && {}\Delta_{\Delta_\mathcal{C}}\ar[r]^{\Delta_{\mathfrak{m}}}\ar[d]_{\mu_\mathcal{C}} & {}\Delta_\mathcal{C}\ar[d]^{\mathfrak{m}}\\& {}\Delta_\mathcal{C}\ar[ur]_{\mathfrak{m}} &&& {}\Delta_\mathcal{C}\ar[r]^{\mathfrak{m}} & {}\mathcal{C}}}
\eeqn
commute. As can be seen directly from the definition of $\mathscr{G}_\mathrm{fin}$, these correspond exactly to the requirements\re{convcombcon}. Hence, one definitively ``correct'' definition of convex space is given by
\beq
\textrm{convex space}=\mathscr{G}_\mathrm{fin}\textrm{-algebra}.
\eeq

\begin{rem}
Since most of the applications we have in mind do not require convex combinations of infinitely many elements, it is sufficient to work with this finitary version of the Giry monad. The advantage of this is that it gives a purely algebraic description of convex spaces, thereby facilitating the reformulation~\ref{convspcdefn}. However for applications in which one needs a structure that allows to take convex combinations of infinitely many points, or more generally taking the barycenter of an arbitrary probability measure, one could define an \emph{ultraconvex space} to be an algebra of the Giry monad $\mathscr{G}$ based on the functor $\mathcal{P}:\mathtt{Meas}\ra\mathtt{Meas}$, where $\mathtt{Meas}$ is an appropriate category of measurable spaces. $\mathcal{P}$ maps each measurable space to the set of all its probability measures, together with an appropriate $\sigma$-algebra on that set. Algebras for the Giry monad over the category of polish spaces have been studied in~\cite{Dob}.
\end{rem}

We now turn to the category of stochastic matrices $\mathtt{FinStoMap}$ that was introduced in~\cite{pcsm}. We will see later that a structure\re{convcomb} satisfying\re{convcombcon} also turns $\mathcal{C}$ uniquely into a model of the Lawvere theory $\mathtt{FinStoMap}^\mathrm{op}$, and vice versa. So, we now proceed to study what it means for a functor $L:\mathtt{FinStoMap}^\mathrm{op}\lra\mathtt{Set}$ to be product-preserving. For any $\mathcal{C}\in\mathtt{Set}$, consider the functor
\beqa
\prod_\mathcal{C}:\mathtt{FinMap}^\mathrm{op} & \lra & \mathtt{Set},\qquad [n]\mapsto \mathcal{C}^{\times n}\\
\quad \left([m]\stackrel{f}{\ra}[n]\right)^\mathrm{op} & \mapsto & \left((x_1,\ldots,x_n)\mapsto(x_{f(1)},\ldots,x_{f(m)})\right).
\eeqa
Using the notation of~\cite{pcsm}, the following well-known observation arises:

\begin{prop}
\label{prodpreschar}
Consider a functor $L:\mathtt{FinStoMap}^\mathrm{op}\lra\mathtt{Set}$ with $L([n])=\mathcal{C}^{\times n}$ for all $n\in\N_0$. Then the following conditions are equivalent:
\begin{compactenum}
\item $L$ is product-preserving, i.e.
\beqn
\label{coprodtoprod}
\begin{array}{c}L\left(\left(\begin{array}{c}\mathbbm{1}_{n_1}\\0\end{array}\right)\right)=\left(\mathcal{C}^{\times n_1}\times \mathcal{C}^{\times n_2}\stackrel{p_1}{\lra}\mathcal{C}^{\times n_1}\right)\\\\
L\left(\left(\begin{array}{c}0\\\mathbbm{1}_{n_2}\end{array}\right)\right)=\left(\mathcal{C}^{\times n_1}\times \mathcal{C}^{\times n_2}\stackrel{p_2}{\lra}\mathcal{C}^{\times n_2}\right)\end{array}
\eeqn
for all $n_1,n_2\in\N_0$, where $p_1$ and $p_2$ are the product projections in $\mathtt{Set}$.
\item $L$ maps $\otimes$ to $\times$.
\item The diagram
\beqn
\label{prodpresdiagram}
\vcenter{\xymatrix{{}\mathtt{FinMap}^\mathrm{op}\:\ar@{^{(}->}[rr]\ar[rd]_{\prod_\mathcal{C}} && {}\mathtt{FinStoMap}^\mathrm{op}\ar[dl]^L\\
& {}\mathtt{Set}}}
\eeqn
commutes.
\end{compactenum}
\end{prop}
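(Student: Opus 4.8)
The plan is to treat Proposition~\ref{prodpreschar} as a routine application of standard facts about product-preserving functors on a Lawvere theory; the only genuine input I would use is the structural description of $\mathtt{FinStoMap}^\mathrm{op}$ from~\cite{pcsm}. Concretely, I will use that $\otimes$ realizes the finite products of $\mathtt{FinStoMap}^\mathrm{op}$: the object $[n]$ is the $n$-fold product of $[1]$, the two product projections out of $[n_1]\otimes[n_2]=[n_1+n_2]$ are precisely the matrices $\left(\begin{smallmatrix}\mathbbm{1}_{n_1}\\0\end{smallmatrix}\right)$ and $\left(\begin{smallmatrix}0\\\mathbbm{1}_{n_2}\end{smallmatrix}\right)$ read as morphisms of $\mathtt{FinStoMap}^\mathrm{op}$, and these two morphisms already lie in the wide subcategory $\mathtt{FinMap}^\mathrm{op}$, since they are the images of the coproduct inclusions of the finite sets $[n_1],[n_2]$ into $[n_1+n_2]$. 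The scheme is then (a)$\Leftrightarrow$(b), (c)$\Rightarrow$(a), (a)$\Rightarrow$(c), with only the last implication carrying real weight.

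For (a)$\Leftrightarrow$(b): I read ``$L$ maps $\otimes$ to $\times$'' as the statement that, for all $n_1,n_2$, the canonical comparison morphism $L([n_1]\otimes[n_2])\ra L([n_1])\times L([n_2])$ is an isomorphism (the monoidal-unit clause, $L([0])=\ast$, holds automatically here). Under the hypothesis $L([n])=\mathcal{C}^{\times n}$ this comparison morphism is the map $\mathcal{C}^{\times(n_1+n_2)}\ra\mathcal{C}^{\times n_1}\times\mathcal{C}^{\times n_2}$ whose two components are $L$ applied to the projections recalled above; since $(\mathcal{C}^{\times(n_1+n_2)},p_1,p_2)$ is already a product cone in $\mathtt{Set}$, this comparison is invertible exactly when $L$ sends those two projections to $p_1$ and $p_2$ — which is condition~(a). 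For (c)$\Rightarrow$(a): the two morphisms appearing in~(a) lie in $\mathtt{FinMap}^\mathrm{op}$, so if the triangle\re{prodpresdiagram} commutes then $L$ sends them to the values of $\prod_\mathcal{C}$ on the corresponding coproduct inclusions of finite sets, and by the definition of $\prod_\mathcal{C}$ these values are the coordinate projections $p_1$ and $p_2$.

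The substantive implication is (a)$\Rightarrow$(c). Condition~(a) says $L$ preserves each binary product $[n_1+n_2]=[n_1]\times[n_2]$; by induction $L$ then preserves every product $[n]=[1]^{\times n}$, and from $L([n])=\mathcal{C}^{\times n}$ one reads off that $L$ sends the $i$-th projection $\pi_i:[n]\ra[1]$ to the $i$-th coordinate projection $p_i:\mathcal{C}^{\times n}\ra\mathcal{C}$, and that $L$ commutes with tupling $\langle-,\ldots,-\rangle$. The key combinatorial fact is then that every morphism of $\mathtt{FinMap}^\mathrm{op}$ is a tuple of such projections: given a function $f:[m]\ra[n]$ with $m\geq 1$, the morphism $f^\mathrm{op}:[n]\ra[m]=[1]^{\times m}$ of $\mathtt{FinStoMap}^\mathrm{op}$ has $j$-th component $\pi_j\circ f^\mathrm{op}$, which corresponds under $(-)^\mathrm{op}$ to the composite function $[1]\ra[m]\ra[n]$ picking out $f(j)$, i.e.\ to $\pi_{f(j)}:[n]\ra[1]$; hence $f^\mathrm{op}=\langle\pi_{f(1)},\ldots,\pi_{f(m)}\rangle$. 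Applying $L$ and using the two facts just noted, $L(f^\mathrm{op})=\langle p_{f(1)},\ldots,p_{f(m)}\rangle$, which is exactly the map $(x_1,\ldots,x_n)\mapsto(x_{f(1)},\ldots,x_{f(m)})$, i.e.\ $\prod_\mathcal{C}(f^\mathrm{op})$; the case $m=0$ is trivial since both sides then land in the one-element set $\mathcal{C}^{\times 0}$. As $L$ and $\prod_\mathcal{C}$ agree on objects too, the triangle\re{prodpresdiagram} commutes.

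I expect the only real obstacle to be this last implication, and within it the single non-formal ingredient is the identification of the Cartesian structure of $\mathtt{FinStoMap}^\mathrm{op}$ with morphisms that already live in $\mathtt{FinMap}^\mathrm{op}$ — once that is granted, everything reduces to the general principle that a product-preserving functor out of a category with finite products is pinned down by its action on the generating projections. No convex combinations enter anywhere, since the statement concerns only the ``deterministic'' sub-theory $\mathtt{FinMap}^\mathrm{op}$, so the deformed-associativity bookkeeping is irrelevant; the one thing to handle carefully is keeping the variance straight between $\mathtt{FinStoMap}$, its opposite, and the inclusion of $\mathtt{FinMap}^\mathrm{op}$.
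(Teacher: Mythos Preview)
Your argument is correct and follows essentially the same approach as the paper: both reduce the nontrivial direction to the observation that every $\mathtt{FinMap}^\mathrm{op}$-morphism is a tuple of product projections, and that a product-preserving $L$ therefore sends it to the corresponding tuple of coordinate projections in $\mathtt{Set}$. The only cosmetic difference is the route through the equivalences: the paper proves the cycle (a)$\Rightarrow$(b)$\Rightarrow$(c)$\Rightarrow$(a), writing the projections explicitly as $\partial^{\otimes k}\otimes\id_{[1]}\otimes\partial^{\otimes l}$ and using (b) to identify their $L$-images, whereas you prove (a)$\Leftrightarrow$(b) and (a)$\Rightarrow$(c) directly via abstract tupling --- the underlying computation is the same.
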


\begin{proof}
\underline{(a)$\Rightarrow$(b):} This follows from an application of $L$ to the $\mathtt{FinStoMap}$-coproduct diagram
\beq
\xymatrix{[n_1]\ar@{^{(}->}[d]\ar[rr]^{f_1} && [m_1]\ar@{_{(}->}[d]\\
[n_1+n_2]\ar[rr]^{f_1\otimes f_2} && [m_1+m_2]\\
[n_2]\ar@{_{(}->}[u]\ar[rr]^{f_2} && [m_2]\ar@{^{(}->}[u]}
\eeq
together with the product universal property in $\mathtt{Set}$.

\underline{(b)$\Rightarrow$(c):} Since $L(\partial)$ is necessarily the unique map $\mathcal{C}\ra\ast$, we know that the map
\beq
L(\partial^{\otimes k}\otimes\id_{[1]}\otimes\partial^{\otimes l}):\mathcal{C}^{\times(k+1+l)}\lra\mathcal{C}
\eeq
is the projection onto the $(k+1)$-th factor. Then for $f\in\mathtt{FinMap}([m],[n])$, the assertion follows from an application of $L$ to the equation
\beq
f(\partial^{\otimes (k-1)}\otimes\id_{[1]}\otimes\partial^{\otimes(m-k)})=\partial^{\otimes(f(k)-1)}\otimes\id_{[1]}\otimes\partial^{\otimes(n-f(k))}.
\eeq

\underline{(c)$\Rightarrow$(a):} The equations\re{coprodtoprod} are the special cases of the commutative diagram where one starts in $\mathtt{FinMap}$ with the coproduct inclusions.
\end{proof}

We now claim that the equation
\beqn
\label{lawveremonad}
L(A)(x_1,\ldots,x_n)=\left(\mathfrak{m}\left(\sum_{i=1}^nA_{i1}\underline{x}_i\right),\ldots,\mathfrak{m}\left(\sum_{i=1}^nA_{im}\underline{x}_i\right)\right)
\eeqn
uniquely determines a structure of $\mathtt{FinStoMap}^\mathrm{op}$-model $L$ on a set $\mathcal{C}$ from a $\mathscr{G}_\mathrm{fin}$-algebra structure $\mathfrak{m}:\Delta_\mathcal{C}\ra\mathcal{C}$, and vice versa. Furthermore, we claim that this correspondence is such that morphisms of $\mathscr{G}_\mathrm{fin}$-algebras coincide with morphisms of $\mathtt{FinStoMap}^\mathrm{op}$-models.

We first check that when $\mathfrak{m}$ is given, then $L$ defined by\re{lawveremonad} is a product-preserving functor. Functoriality is expressed by preservation of identities,
\beq
L(\mathbbm{1}_n)(x_1,\ldots,x_n)=\left(\mathfrak{m}(\underline{x}_1),\ldots,\mathfrak{m}(\underline{x}_n)\right)\stackrel{(\ref{monadalgebra})}{=}(x_1,\ldots,x_n),
\eeq
and contravariant preservation of matrix multiplication for $A:[m]\ra[n]$ and $B:[n]\ra[q]$. For the verification of the latter, we have to evaluate the expression
\beq
L\left(BA\right)(x_1,\ldots,x_q)=\left(\mathfrak{m}\left(\sum_{i=1}^q(BA)_{i1}\underline{x}_i\right),\ldots,\mathfrak{m}\left(\sum_{i=1}^q(BA)_{im}\underline{x}_i\right)\right).
\eeq
We do this componentwise, where $k\in[m]$ is the component index,
\beq
\left[L\left(BA\right)(x_1,\ldots,x_q)\right]_k=\mathfrak{m}\left(\sum_{i=1}^q(BA)_{ik}\underline{x}_i\right)=\mathfrak{m}\left(\sum_{i,j=1}^{q,n}B_{ij}A_{jk}\underline{x}_i\right)
\eeq
\beq
\stackrel{(\ref{monadmult})}{=}\mathfrak{m}\left(\mu_\mathcal{C}\left(\sum_{j=1}^nA_{jk}\,\underline{\sum_{i=1}^qB_{ij}\underline{x}_i}\,\right)\right)\stackrel{(\ref{monadalgebra})}{=}\mathfrak{m}\left(\sum_{j=1}^nA_{jk}\,\underline{\mathfrak{m}\left(\sum_{i=1}^qB_{ij}\underline{x}_i\right)}\,\right)
\eeq\\
\beq
\stackrel{(\ref{lawveremonad})}{=}\mathfrak{m}\left(\sum_{j=1}^nA_{jk}\,\underline{\left[L(B)(x_1,\ldots,x_q)\right]_j}\,\right)
\eeq\\
\beq
\stackrel{(\ref{lawveremonad})}{=}\left[L(A)\left(\left[L(B)(x_1,\ldots,x_q)\right]_1,\ldots,\left[L(B)(x_1,\ldots,x_q)\right]_n\right)\right]_k
\eeq\\
\beq
=\left[L(A)L(B)(x_1,\ldots,x_q)\right]_k,
\eeq
thereby showing that
\beq
L(BA)(x_1,\ldots,x_q)=L(A)L(B)\left(x_1,\ldots,x_q\right),
\eeq
which completes the verification of functoriality. Preservation of products is immediate, as the condition\re{prodpresdiagram} holds by\re{lawveremonad} and the first diagram of\re{monadalgebra}.

Now given two $\mathscr{G}_\mathrm{fin}$-algebras $\mathfrak{m}:\Delta_\mathcal{C}\ra\mathcal{C}$ and $\mathfrak{m}':\Delta_{\mathcal{C}'}\ra\mathcal{C}'$, a morphism of algebras is a map $f:\mathcal{C}\ra\mathcal{C}'$ such that the diagram
\beqn
\label{algebramorph}
\vcenter{\xymatrix{{}\Delta_\mathcal{C}\ar[r]^{\Delta_f}\ar[d]_{\mathfrak{m}} & {}\Delta_{\mathcal{C}'}\ar[d]^{\mathfrak{m}'}\\
{}\mathcal{C}\ar[r]^f & {}\mathcal{C}'}}
\eeqn
commutes. Then the induced functors $L$ and $L'$ behave with respect to $f$ in the following way:
\beq
\left[L'(A)\left(f(x_1),\ldots,f(x_n)\right)\right]_k\stackrel{(\ref{lawveremonad})}{=}\mathfrak{m}'\left(\sum_{i=1}^nA_{ik}\underline{f(x)_i}\right)=\mathfrak{m}'\left(\Delta_f\left(\sum_{i=1}A_{ik}\underline{x}_i\right)\right)
\eeq
\beq
\stackrel{(\ref{algebramorph})}{=}f\left(\mathfrak{m}\left(\sum_{i=1}^nA_{ik}\underline{x}_i\right)\right)\stackrel{(\ref{lawveremonad})}{=}f\left(\left[L(A)(x_1,\ldots,x_n)\right]\right)
\eeq
thereby showing that $L'(A)f^{\times n}=f^{\times m}L(A)$, which means that $f$ also is a morphism of $\mathtt{FinStoMap}^\mathrm{op}$-models.

Now for the other direction: given $L$, equation\re{lawveremonad} requires that we define the structure map as
\beqn
\label{monadlawvere}
\mathfrak{m}\left(\sum_{i=1}^n\lambda_i\underline{x}_i\right)\equiv L\left(\left(\begin{array}{c}\lambda_1\\\vdots\\\lambda_n\end{array}\right)\right)(x_1,\ldots,x_n)=L\big(\vec{\lambda}\big)(x_1,\ldots,x_n).
\eeqn
We need to verify the desired properties\re{monadalgebra}. The unit condition is essentially trivial,
\beq
\mathfrak{m}\left(\underline{x}\right)=L\left(\mathbbm{1}_1\right)(x)=x
\eeq
while the associativity of the action requires more work:\\
\beq
\mathfrak{m}\left(\mu_\mathcal{C}\left(\sum_{i=1}^n\lambda_i\,\underline{\sum_{j=1}^m\mu_{ji}\underline{x}_j}\,\right)\right)\stackrel{(\ref{monadmult})}{=}\mathfrak{m}\left(\sum_{i=1}^n\lambda_i\sum_{j=1}^m\mu_{ji}\underline{x}_j\right)
\eeq\\
\beq
\stackrel{(\ref{monadlawvere})}{=}L\left(\left(\begin{array}{c}\sum_{i=1}^n\lambda_i\mu_{1i}\\\vdots\\\sum_{i=1}^n\lambda_i\mu_{mi}\end{array}\right)\right)(x_1,\ldots,x_m)
\eeq\\
\beq
=L\big(\underline{\mu}\vec{\lambda}\big)(x_1,\ldots,x_m)=L\big(\vec{\lambda}\big)L(\underline{\mu})(x_1,\ldots,x_m)
\eeq\\
where the matrix $\underline{\mu}=\left(\mu_{ji}\right)_{j,i}$ has columns $\vec{\mu}_1,\ldots,\vec{\mu}_n$, and after possibly adding dummy terms, we were able to assume that under the large underscore, neither the number of terms $m$ nor the $x_j$ depend on $i$. Since $L$ maps coproducts to products, and the columns of the matrix $\underline{\mu}$ are exactly its coproduct components, we can continue the calculation with
\beq
=L\big(\vec{\lambda}\big)\left(L\big(\vec{\mu}_1\big)(x_1,\ldots,x_m),\ldots,L\big(\vec{\mu}_n\big)(x_1,\ldots,x_m)\right)
\eeq
\beq
\stackrel{(\ref{monadlawvere})}{=}\mathfrak{m}\left(\sum_{i=1}^n\lambda_i\,\underline{L\big(\vec{\mu}_i\big)(x_1,\ldots,x_m)}\,\right)\stackrel{(\ref{monadlawvere})}{=}\mathfrak{m}\left(\sum_{i=1}\lambda_i\,\underline{\mathfrak{m}\left(\sum_{j=1}^m\mu_{ji}\underline{x}_j\right)}\,\right)
\eeq\\
which shows that also the second diagram of\re{monadalgebra} commutes.

What still remains to check is that morphisms of $\mathtt{FinStoMap}^\mathrm{op}$-models also are morphisms of the induced $\mathscr{G}_\mathrm{fin}$-algebras. This follows from essentially the same calculation as above:\\
\beq
\mathfrak{m}'\left(\Delta_f\left(\sum_{i=1}A_{ik}\underline{x}_i\right)\right)=\mathfrak{m}'\left(\sum_{i=1}^nA_{ik}\underline{f(x)_i}\right)=\left[L'(A)\left(f(x_1),\ldots,f(x_n)\right)\right]_k
\eeq\\
\beq
=f\left(\left[L(A)(x_1,\ldots,x_n)\right]\right)=f\left(\mathfrak{m}\left(\sum_{i=1}^nA_{ik}\underline{x}_i\right)\right).
\eeq

Finally, as the observation concluding these considerations, it follows from the uniqueness statement of the correspondence $\mathfrak{m}\leftrightsquigarrow L$ that the construction of $L$ from $\mathfrak{m}$ is inverse to the construction of $\mathfrak{m}$ from $L$.

\begin{rem}
This correspondence between algebras of a monad and models of a Lawvere theory is a particular instance of a well-known general correspondence between finitary monads and Lawvere theories~\cite{HP}. (A monad is called \emph{finitary} if the endofunctor preserves filtered colimits.)
\end{rem}

Hence, we now have two definitively correct possible definitions of convex space: a $\mathcal{G}_\mathrm{fin}$-algebra, or a model of $\mathtt{FinStoMap}^\mathrm{op}$. We can now apply theorem~\cite[3.14]{pcsm} to show that the compatibility requirements of definition~\ref{convspcdefn} do indeed give all the relations~\ref{convcombcon} that we expect convex combinations to have.

\begin{prop}
\label{makeconcrete}
Given a set $\mathcal{C}$ together with a structure of $\mathtt{FinStoMap}^\mathrm{op}$-model in terms of a product-preserving functor $L:\mathtt{FinStoMap}^\mathrm{op}\lra\mathtt{Set}$, the operations
\beqn
\label{ccfromc}
cc_\lambda\equiv L(c_\lambda)
\eeqn
define the structure of a convex space on $\mathcal{C}$. Conversely given $cc_\lambda$, there is a unique $L$ such that\re{ccfromc} holds.
\end{prop}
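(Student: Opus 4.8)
The plan is to treat the two directions separately; the substance of the converse is imported from~\cite{pcsm}, while the first direction is a routine unwinding.

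\textbf{From $L$ to $cc_\lambda$.} Given a product-preserving $L$, set $cc_\lambda\equiv L(c_\lambda)$, where $c_\lambda=\binom{\lambda}{\overline{\lambda}}\in\mathtt{FinStoMap}([1],[2])$, so that indeed $L(c_\lambda):\mathcal{C}^{\times 2}\ra\mathcal{C}$. Each of the four defining laws of a convex space is obtained by applying $L$ to an identity between composites of stochastic matrices, that identity being verified by a one-line matrix computation: the unit law from the fact that $c_0$ is the second coproduct inclusion $[1]\hookrightarrow[2]$, so that $L(c_0)$ is the second product projection by Proposition~\ref{prodpreschar}; idempotency from $\nabla\,c_\lambda=\mathbbm{1}_1$, where $\nabla:[2]\ra[1]$ is the unique function, together with the identification of $L(\nabla)$ with the diagonal of $\mathcal{C}$; parametric commutativity from $\sigma\,c_{\overline{\lambda}}=c_\lambda$, with $\sigma:[2]\ra[2]$ the transposition; and deformed parametric associativity from the observation that its two sides are $L$ applied to the two matrices $[1]\ra[3]$ given by $(c_\mu\otimes\mathbbm{1}_1)\,c_\lambda$ and by $(\mathbbm{1}_1\otimes c_{\widetilde{\mu}})\,c_{\widetilde{\lambda}}$, which one checks are equal exactly when $\widetilde{\lambda}=\lambda\mu$ and $\widetilde{\mu}=\lambda\overline{\mu}/\overline{\lambda\mu}$, the coefficient multiplying $\widetilde{\mu}$ vanishing (so that $\widetilde{\mu}$ may be chosen freely) precisely when $\lambda=\mu=1$. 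The reduction of the nested expressions to a single matrix uses that $L$ carries $\otimes$ to $\times$ and functions to the corresponding projections and diagonals, i.e.\ Proposition~\ref{prodpreschar}.

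\textbf{From $cc_\lambda$ to $L$, and uniqueness.} Here the key input is the presentation of the category of stochastic matrices. Phrased for the Lawvere theory $\mathtt{FinStoMap}^\mathrm{op}$, theorem~\cite[3.14]{pcsm} says that $\mathtt{FinStoMap}^\mathrm{op}$ is generated, as a Lawvere theory, by the binary operations $c_\lambda$ $(\lambda\in[0,1])$ subject to exactly the four relations\re{unitlaw}--\ref{defparaass}. Hence a product-preserving functor $L:\mathtt{FinStoMap}^\mathrm{op}\ra\mathtt{Set}$ with $L([n])=\mathcal{C}^{\times n}$ amounts to no more than the family of maps $L(c_\lambda):\mathcal{C}^{\times 2}\ra\mathcal{C}$, and these are constrained by precisely the axioms of Definition~\ref{convspcdefn}. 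By the universal property of a presentation, every family $(cc_\lambda)$ satisfying those axioms extends to one and only one such $L$ with $L(c_\lambda)=cc_\lambda$; the same universal property also re-derives the first direction, since the relations\re{unitlaw}--\ref{defparaass} hold in $\mathtt{FinStoMap}^\mathrm{op}$ and so are preserved by any $L$, the matrix identities above being merely the explicit unwinding of those relations.

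\textbf{Main obstacle.} The real work lies not in Proposition~\ref{makeconcrete} as such but in invoking~\cite[3.14]{pcsm} accurately: one must check that, once the ``deterministic'' part of the presentation of $\mathtt{FinStoMap}$ has been absorbed into the cartesian structure of the Lawvere theory $\mathtt{FinStoMap}^\mathrm{op}$, the generators that remain are exactly the $c_\lambda$ and the relations that remain are exactly\re{unitlaw}--\ref{defparaass} --- none extra, none missing. Granting that, the proposition follows formally from the monad/Lawvere-theory correspondence established above together with the universal property of a presentation, the only genuine computation being the matrix identity behind deformed parametric associativity and its degenerate case $\lambda=\mu=1$.
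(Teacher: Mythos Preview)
Your proof is correct and follows essentially the same route as the paper: both directions rest on theorem~\cite[3.14]{pcsm}, with the observation that the ``deterministic'' generators $\partial,e,s$ and their relations are absorbed by product-preservation (Proposition~\ref{prodpreschar}), leaving precisely the relations~\cite[(8),(9),(10),(13)]{pcsm}, which correspond one-to-one with the axioms\re{unitlaw}--\ref{defparaass}. The paper states this correspondence tersely by citing the relation numbers, whereas you spell out the underlying matrix identities; the content is the same.
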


\begin{proof}
This is the main application of theorem~\cite[3.14]{pcsm}. First note that due to proposition~\ref{prodpreschar}, any product-preserving $L$ satisfies
\beqa
L(\partial):&\mathcal{C}\ra\ast,\quad& x\mapsto\ast\\
L(e):&\mathcal{C}\ra\mathcal{C}\times\mathcal{C},\quad& x\mapsto(x,x)\\
L(s):&\mathcal{C}\times\mathcal{C}\ra\mathcal{C}\times\mathcal{C},\quad& (x,y)\mapsto(y,x)
\eeqa
Hence, $L$ is automatically compatible with the relations~\cite[(2)-(7), (11), (12)]{pcsm}.

However, $L$ also needs to preserve the other relations of $\mathtt{FinStoMap}'$. In exactly this order, preservation of each of the relations~\cite[(8), (9), (10) and (13)]{pcsm} is equivalent to one of the requirements\re{unitlaw} to\re{defparaass}.
\end{proof}

We now turn to proving that the category $\mathtt{ConvSpc}$ enjoys a certain rigidity property expressed by theorem~\ref{convspc}.

For the following lemma, consider the family of maps on the unit interval $[0,1]$ that is given by
\beq
f_{y_0,y_1}:[0,1]\lra[0,1],\qquad x\mapsto\overline{x}y_0+xy_1,\quad y_0,y_1\in[0,1].
\eeq

\begin{lem}
\label{unitinterval}
\begin{compactenum}
\item The unit interval $[0,1]$ has a unique structure of convex space in which all of the $f_{y_0,y_1}$ are convex maps.
\item For every convex space $\mathcal{C}$ and every pair of points $x,y\in\mathcal{C}$, there is a unique convex map $g_{x,y}:[0,1]\ra\mathcal{C}$ with $g(0)=x$ and $g(1)=y$.
\end{compactenum}
\end{lem}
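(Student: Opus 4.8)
The plan is to prove the two parts in order, since part (b) refers to the convex space structure produced by part (a).

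\emph{Part (a).} Existence is cheap: the convex space axioms are equations, so any subset of a convex space closed under the $cc_\lambda$ inherits the structure, and $[0,1]\subseteq\R$ is closed under the standard combinations $cc_\lambda(a,b)=\lambda a+\overline\lambda b$; moreover each $f_{y_0,y_1}$ is affine in its argument, hence convex. So the work is uniqueness. Let $cc'$ be any convex space structure on $[0,1]$ for which all $f_{y_0,y_1}$ are convex, and set $\phi(\lambda):=cc'_\lambda(0,1)\in[0,1]$. Evaluating the convexity equation for $f_{y_0,y_1}$ at the pair $(0,1)$ and using $f_{y_0,y_1}(0)=y_0$, $f_{y_0,y_1}(1)=y_1$ yields
\beq
cc'_\lambda(y_0,y_1)=\overline{\phi(\lambda)}\,y_0+\phi(\lambda)\,y_1\qquad\text{for all }y_0,y_1\in[0,1],
\eeq
so $cc'$ is determined by the single function $\phi$, and it remains to identify $\phi$. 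From the unit law, $\phi(0)=cc'_0(0,1)=1$. Substituting the displayed formula into parametric commutativity $cc'_\lambda(1,0)=cc'_{\overline\lambda}(0,1)$ and comparing coefficients gives $\phi(\lambda)+\phi(\overline\lambda)=1$; in particular $\phi(1)=0$ and $\phi(1/2)=1/2$. Substituting it into deformed parametric associativity\re{defparaass} (both sides become affine in $x,y,z$) and comparing the coefficient of $x$ gives $\overline{\phi(\lambda\mu)}=\overline{\phi(\lambda)}\,\overline{\phi(\mu)}$. Hence $g:=1-\phi$ is a multiplicative self-map of $[0,1]$ with $g(0)=0$, $g(1)=1$, $g(1/2)=1/2$ and $g(1-\lambda)=1-g(\lambda)$.

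The heart of the argument is then an elementary functional-equation fact: any multiplicative $g:[0,1]\to[0,1]$ is non-decreasing, because for $0<\mu$ and $\lambda\le\mu$ we have $g(\lambda)=g(\mu)\,g(\lambda/\mu)\le g(\mu)$ since $g(\lambda/\mu)\le1$. Combining this with multiplicativity and the reflection $g(1-\lambda)=1-g(\lambda)$, an induction on the denominator (halving via $g(\lambda/2)=g(\lambda)g(1/2)=g(\lambda)/2$, reflecting via $g(1-\lambda)=1-g(\lambda)$) shows that $g$ fixes every dyadic rational in $[0,1]$; and a non-decreasing map fixing a dense subset of $[0,1]$ is the identity. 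Therefore $g=\id$, $\phi(\lambda)=\overline\lambda$, and $cc'$ is the standard structure $\lambda x+\overline\lambda y$. I expect this step to be the only delicate one — in particular the point that monotonicity of $g$ comes for free, so that no continuity or measurability hypothesis on $g$ is needed; everything else is routine substitution into the axioms.

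\emph{Part (b).} Equip $[0,1]$ with the structure from (a). Define $g_{x,y}:[0,1]\to\mathcal{C}$ by $g_{x,y}(\lambda):=cc_\lambda(y,x)$; then $g_{x,y}(0)=cc_0(y,x)=x$ and $g_{x,y}(1)=cc_1(y,x)=cc_0(x,y)=y$ by the unit law and parametric commutativity. For convexity one must verify $g_{x,y}(cc_\mu(a,b))=cc_\mu(g_{x,y}(a),g_{x,y}(b))$ for $a,b\in[0,1]$, that is, $cc_{\mu a+\overline\mu b}(y,x)=cc_\mu\big(cc_a(y,x),cc_b(y,x)\big)$; both sides are iterated binary convex combinations of $x$ and $y$ assigning total weight $\mu\overline a+\overline\mu\overline b$ to $x$ and $\mu a+\overline\mu b$ to $y$, so they coincide by the $\mathscr{G}_\mathrm{fin}$-algebra description of convex combinations established via Proposition~\ref{makeconcrete} and the monad--Lawvere correspondence. (Equivalently, $g_{x,y}$ is the composite of the evident convex isomorphism $[0,1]\cong\Delta_{[2]}$ with the canonical convex map $\Delta_{[2]}\to\mathcal{C}$ sending $\underline{1}\mapsto x$, $\underline{2}\mapsto y$.) Uniqueness is then immediate: if $g:[0,1]\to\mathcal{C}$ is convex with $g(0)=x$ and $g(1)=y$, then, since $\lambda=cc_\lambda(1,0)$ in $[0,1]$, convexity gives $g(\lambda)=cc_\lambda(g(1),g(0))=cc_\lambda(y,x)=g_{x,y}(\lambda)$ for all $\lambda$, so $g=g_{x,y}$.
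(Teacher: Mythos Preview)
Your argument is correct in both parts. Part (b) is essentially the paper's proof, only you invoke the $\mathscr{G}_\mathrm{fin}$-algebra description to check convexity of $g_{x,y}$ where the paper does the short coefficient computation by hand; either is fine.

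Part (a), however, follows a genuinely different route. The paper never extracts your function $\phi$. Instead it first pins down $\frac{1}{2}\underline{0}+\frac{1}{2}\underline{1}=\underline{1/2}$ via the unique fixed point of the flip $f_{1,0}$, transports this to arbitrary midpoints using $f_{x,y}$, and then runs a nested-interval bisection: by induction on $p$ it traps $\overline{\lambda}\underline{x}+\lambda\underline{y}$ inside the dyadic subinterval of $[x,y]$ of width $2^{-p}$ containing the standard value, and concludes by letting $p\to\infty$. Your approach is more algebraic: reducing everything to $g=1-\phi$, you read off from the axioms that $g$ is multiplicative and reflection-symmetric, and the key observation---that a multiplicative self-map of $[0,1]$ is automatically non-decreasing because the codomain is bounded by $1$---gives monotonicity for free, so density of dyadics finishes the job. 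This is arguably cleaner and makes transparent why no continuity hypothesis is needed; the paper's bisection argument reaches the same conclusion but with more geometric bookkeeping and without isolating the multiplicative functional equation.
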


\begin{proof}
(a) In order to distinguish elements of the convex space $[0,1]$ from coefficients in $[0,1]$, we distinguish the fomer by means of the underline notation $\underline{\cdot}$.

We first show that the convex combination $\frac{1}{2}\underline{0}+\frac{1}{2}\underline{1}$ is necessarily equal to $\underline{1/2}$. To this end, consider the flip map $f_{1,0}$:
\beq
f_{1,0}\left(\frac{1}{2}\underline{0}+\frac{1}{2}\underline{1}\right)=\frac{1}{2}f_{1,0}\left(\underline{0}\right)+\frac{1}{2}f_{1,0}\left(\underline{1}\right)=\frac{1}{2}\underline{1}+\frac{1}{2}\underline{0}=\frac{1}{2}\underline{0}+\frac{1}{2}\underline{1}
\eeq
Hence, the assertion follows from the fact that $\underline{1/2}$ is the unique fixed point of $f_{1,0}$.

But then also for any pair $x,y\in[0,1]$, we have that
\beq
\frac{1}{2}\underline{x}+\frac{1}{2}\underline{y}=\frac{1}{2}f_{x,y}\left(\underline{0}\right)+\frac{1}{2}f_{x,y}\left(\underline{1}\right)=f_{x,y}\left(\frac{1}{2}\underline{0}+\frac{1}{2}\underline{1}\right)=f_{x,y}\left(\underline{\frac{1}{2}}\right)=\underline{\frac{1}{2}x+\frac{1}{2}y}
\eeq

Next, we claim that when $x<y$, $p,q\in\N_0$ and $\lambda\in(0,1)$ with $q2^{-p}\leq\lambda\leq(q+1)2^{-p}$, then
\beqn
\label{ininterval}
\left(\overline{\lambda}\underline{x}+\lambda\underline{y}\right)\in\left[\overline{q2^{-p}}x+q2^{-p}y,\,\overline{(q+1)2^{-p}}x+(q+1)2^{-p}y\right]
\eeqn
We prove this by induction on $p$. For $p=0$, this is given by
\beq
\overline{\lambda}\underline{x}+\lambda\underline{y}=f_{x,y}\left(\overline{\lambda}\underline{0}+\lambda\underline{1}\right)\in\mathrm{im}\left(f_{x,y}\right)=[x,y].
\eeq
For $p\geq 1$, consider the case $\lambda\geq 1/2$ first, which is equivalent to $q\geq 2^{p-1}$. Then 
\beq
\left(q-2^{p-1}\right)2^{-(p-1)}\leq 2\lambda-1\leq\left(q+1-2^{p-1}\right)2^{-(p-1)}
\eeq
so that
\beq
\overline{\lambda}\underline{x}+\lambda\underline{y}=2\overline{\lambda}\left(\frac{1}{2}\underline{x}+\frac{1}{2}\underline{y}\right)+(2\lambda-1)\underline{y}=2\overline{\lambda}\:\underline{\left(\frac{1}{2}x+\frac{1}{2}y\right)}+(2\lambda-1)\underline{y}
\eeq
which, by the induction assumption, is bigger than or equal to
\beq
\overline{(q-2^{p-1})2^{-(p-1)}}\left(\frac{1}{2}x+\frac{1}{2}y\right)+(q-2^{p-1})2^{-(p-1)}y=\overline{q2^{-p}}x+q2^{-p}y,
\eeq
as was to be shown. The upper bound works in exactly the same way. The case $\lambda\leq 1/2$ can either be treated in a similar way, or can be reduced to the case $\lambda\geq 1/2$ by an application of the flip map $f_{1,0}$.

But then by the principle of nested intervals, equation\re{ininterval} shows that $\overline{\lambda}\underline{x}+\lambda\underline{y}=\underline{\overline{\lambda}x+\lambda y}$, which concludes the proof.

(b) For $\lambda\in[0,1]$, the requirements imply that we need to set
\beq
g(\lambda)\equiv\overline{\lambda}x+\lambda y.
\eeq
We now verify that this is indeed a convex map. With $\mu,\lambda_1,\lambda_2\in[0,1]$, we have
\beqn
\label{twopointconvex}
g\left(\mu\lambda_1+\overline{\mu}\lambda_2\right)=\overline{\left(\mu\lambda_1+\overline{\mu}\lambda_2\right)}\,x+\left(\mu\lambda_1+\overline{\mu}\lambda_2\right)y.
\eeqn
We proceed by evaluating the first coefficient further,
\beq
\overline{\left(\mu\lambda_1+\overline{\mu}\lambda_2\right)}=\overline{\mu\lambda_1}+\overline{\overline{\mu}\lambda_2}-1
\eeq
\beq
=\overline{\mu}+\overline{\lambda}_1-\overline{\mu}\overline{\lambda}_1+\mu+\overline{\lambda}_2-\mu\overline{\lambda}_2-1=\mu\overline{\lambda}_1+\overline{\mu}\overline{\lambda}_2
\eeq
proving that\re{twopointconvex} yields
\beq
g\left(\mu\lambda_1+\overline{\mu}\lambda_2\right)=\mu\left(\overline{\lambda}_1x+\lambda_1y\right)+\overline{\mu}\left(\overline{\lambda}_2x+\lambda_2y\right)=\mu g(\lambda_1)+\overline{\mu}g(\lambda_2),
\eeq
as was to be shown.
\end{proof}

\begin{thm}
\label{convspc}
The identity functor is the only endofunctor of $\mathtt{ConvSpc}$ that makes the diagram
\beq
\xymatrix{{}\mathtt{ConvSpc}\ar[rr]\ar[rd] && {}\:\mathtt{ConvSpc}\ar[ld]\\
& {}\mathtt{Set}}
\eeq
commute.
\end{thm}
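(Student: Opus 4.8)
The plan is to use the unit interval $[0,1]$, with its standard convex structure, as a probe object, exploiting the two rigidity statements of Lemma~\ref{unitinterval}. Write $U:\mathtt{ConvSpc}\ra\mathtt{Set}$ for the forgetful functor and let $F$ be an endofunctor of $\mathtt{ConvSpc}$ with $U\circ F=U$. This hypothesis says exactly that $F$ leaves underlying data alone: for every convex space $\mathcal{C}$ the set underlying $F(\mathcal{C})$ equals the set underlying $\mathcal{C}$ (only the operations $cc_\lambda$ may a priori have changed), and for every convex map $f$ the underlying function of $F(f)$ equals that of $f$. Hence it suffices to prove that for each $\mathcal{C}$ the convex structure of $F(\mathcal{C})$ is literally that of $\mathcal{C}$; identity on morphisms then follows for free.

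The first step is to show $F([0,1])=[0,1]$. Each $f_{y_0,y_1}:[0,1]\ra[0,1]$ is a convex map for the standard structure, so $F(f_{y_0,y_1})$ is a convex map $F([0,1])\ra F([0,1])$, and its underlying function is still $f_{y_0,y_1}$. Thus the convex structure carried by $F([0,1])$ on the set $[0,1]$ is one for which all the $f_{y_0,y_1}$ are convex; by the uniqueness in Lemma~\ref{unitinterval}(a) it must be the standard one, so $F([0,1])=[0,1]$ on the nose.

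Now fix a convex space $\mathcal{C}$ and points $x,y\in\mathcal{C}$. By Lemma~\ref{unitinterval}(b) the map $g_{x,y}:[0,1]\ra\mathcal{C}$, $\lambda\mapsto\overline{\lambda}x+\lambda y$, is the unique convex map sending $0\mapsto x$, $1\mapsto y$. Applying $F$ and using the previous step, $g_{x,y}$ is also a convex map $[0,1]\ra F(\mathcal{C})$ (same underlying function, hence same endpoint values). Invoking Lemma~\ref{unitinterval}(b) once more, now for the convex space $F(\mathcal{C})$, the unique such map is $\lambda\mapsto\overline{\lambda}x+\lambda y$ with the convex combination computed in $F(\mathcal{C})$. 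Equating the two descriptions of this one function yields that the binary convex combinations of $\mathcal{C}$ and of $F(\mathcal{C})$ agree for all $\lambda$ and all $x,y$; that is, $F(\mathcal{C})=\mathcal{C}$. Since $F$ is then the identity on objects and on underlying functions of morphisms, $F=\id$.

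There is no genuinely hard step once Lemma~\ref{unitinterval} is available; the one point requiring care is the logical order, namely that $F([0,1])=[0,1]$ must be established before part (b) of the lemma is applied to $F(\mathcal{C})$, since that part speaks of convex maps out of the \emph{standard} interval. Conceptually, $[0,1]$ is the free convex space on two generators, so $F$'s behaviour on it determines $F$ everywhere, and Lemma~\ref{unitinterval}(a) is precisely what forces $F$ to fix this free object.
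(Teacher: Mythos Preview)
Your proof is correct and follows essentially the same route as the paper: first pin down $F([0,1])=[0,1]$ via Lemma~\ref{unitinterval}(a), then use Lemma~\ref{unitinterval}(b) to conclude that the convex structure on $F(\mathcal{C})$ agrees with that on $\mathcal{C}$ for every $\mathcal{C}$. Your write-up is in fact a bit more explicit than the paper's about how the uniqueness in part~(b) is being invoked twice (once for $\mathcal{C}$, once for $F(\mathcal{C})$) and about why $F([0,1])=[0,1]$ must come first.
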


\begin{proof}
Let $E:\mathtt{ConvSpc}\ra\mathtt{ConvSpc}$ be such an endofunctor. Commutativity of the diagram means that for any $\mathcal{C},\mathcal{C}'\in\mathtt{ConvSpc}$, $E(\mathcal{C})$ and $E(\mathcal{C}')$ are convex spaces with the same underlying sets as $\mathcal{C}$ and $\mathcal{C}'$, respectively, such that
\beqn
\label{convmapincl}
\mathtt{ConvSpc}\left(\mathcal{C},\mathcal{C}'\right)\subseteq\mathtt{ConvSpc}\left(E(\mathcal{C}),E(\mathcal{C}')\right).
\eeqn
Now consider $\mathcal{C}=\mathcal{C}'=[0,1]$. Then it follows from lemma~\ref{unitinterval}(a) that $E([0,1])=[0,1]$ with the standard structure of convex space.

Now consider $\mathcal{C}=[0,1]$ and $\mathcal{C}'$ arbitrary. Then by lemma~\ref{unitinterval}(b), we know that for any $x,y\in\mathcal{C}'$,
\beq
\overline{\lambda}x+\lambda y=g_{x,y}(\lambda).
\eeq
Therefore, the structure of convex space on $E(\mathcal{C}')$ is uniquely determined by\re{convmapincl}, showing that $E(\mathcal{C}')=\mathcal{C}'$.
\end{proof}

\begin{rem}
Theorem~\ref{convspc} displays a rigidity of $\mathtt{ConvSpc}$ that is far from valid for other categories of algebraic structures. For example for the category of groups $\mathtt{Grp}$, there is a non-trivial automorphism $\cdot^\mathrm{op}:\mathtt{Grp}\lra\mathtt{Grp}$, given by mapping each group to its opposite group, such that the diagram
\beq
\xymatrix{{}\mathtt{Grp}\ar[rr]^{\cdot^\mathrm{op}}\ar[rd] && {}\:\mathtt{Grp}\ar[ld]\\
& {}\mathtt{Set}}
\eeq
commutes. Hence, the direct analogue of theorem~\ref{convspc} for groups is false.
\end{rem}

\section{Convex spaces of geometric type}
\label{examplesgeom}

The first main class of examples of convex spaces are the convex subsets of vector spaces, which will be discussed now. We will refer to those convex spaces that can be embedded into a vector space as \ind{convex spaces of geometric type}{convex space of geometric type}. These are the convex spaces studied in convex geometry. We are aware that many relevant properties of a convex set do depend on an explicit embedding into a vector space: for example, the volume or the number of points with integer coordinates are properties that are not invariant under affine transformations and therefore are not invariants of the convex space structure alone. Nevertheless, we hope that the theory of convex spaces~\cite{propclass} might be able to shed new light on some aspects of convex geometry in general and some of the following examples in particular.

We will see in the upcoming two sections that there are also interesting examples of convex spaces that are not of geometric type.

\begin{thm}[convex spaces of geometric type]
\label{convexsubset}
Given a real vector space $V$ and a convex subset $\mathcal{C}\subseteq V$, the vector space structure of $V$ turns $\mathcal{C}$ into a convex space.
\end{thm}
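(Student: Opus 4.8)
The plan is to equip $\mathcal{C}$ with the operations inherited by restriction from the ambient vector space, namely $cc_\lambda(x,y)\equiv\lambda x+\overline{\lambda}y$ for $x,y\in\mathcal{C}$ and $\lambda\in[0,1]$, and then to check the four axioms of Definition~\ref{convspcdefn}. The first thing I would verify is well-definedness: for these to be maps $\mathcal{C}\times\mathcal{C}\to\mathcal{C}$ we need $\lambda x+\overline{\lambda}y=\lambda x+(1-\lambda)y\in\mathcal{C}$ whenever $x,y\in\mathcal{C}$. But this point is exactly a point of the line segment joining $x$ and $y$, so this requirement is nothing but the hypothesis that $\mathcal{C}$ is a convex subset of $V$. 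Thus convexity of $\mathcal{C}$ is precisely what makes the restricted operations available, and no separate argument is needed.

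Once the operations are available, the axioms come almost for free. Each of \re{unitlaw}, \re{idempotency}, \re{paramcomm} and \re{defparaass} is an identity between two expressions built by composing the $cc_\lambda$; whatever nested convex combinations occur on either side again lie in $\mathcal{C}$ by convexity, so the identities make sense in $\mathcal{C}$, and since they hold for the corresponding operations on all of $V$ they hold \emph{a fortiori} on the subset $\mathcal{C}$. Hence it suffices to check the four laws in $V$. The unit law $0\cdot x+\overline{0}\cdot y=y$, idempotency $\lambda x+\overline{\lambda}x=(\lambda+\overline{\lambda})x=x$, and parametric commutativity $\lambda x+\overline{\lambda}y=\overline{\lambda}y+\overline{\overline{\lambda}}x$ are all immediate from $\lambda+\overline{\lambda}=1$ and $\overline{\overline{\lambda}}=\lambda$.

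The one law that calls for a short computation is deformed parametric associativity \re{defparaass}. Expanding the left-hand side in $V$ gives $\lambda\mu\,x+\lambda\overline{\mu}\,y+\overline{\lambda}\,z$, while the right-hand side expands to $\lambda\mu\,x+\overline{\lambda\mu}\,\widetilde{\mu}\,y+\overline{\lambda\mu}\,\overline{\widetilde{\mu}}\,z$. Plugging in $\widetilde{\mu}=\lambda\overline{\mu}/\overline{\lambda\mu}$ (legitimate when $\lambda\mu\neq1$, since then $\overline{\lambda\mu}>0$) makes the $y$-coefficients coincide at once, and the $z$-coefficients agree because $\overline{\lambda\mu}\,\overline{\widetilde{\mu}}=\overline{\lambda\mu}-\lambda\overline{\mu}=\overline{\lambda}$, the last step being the instance $\overline{\lambda\mu}=\overline{\lambda}+\lambda\overline{\mu}$ of the third identity for $\overline{\,\cdot\,}$ recorded in Section~\ref{notation}; in the degenerate case $\lambda=\mu=1$ both sides collapse to $x$ irrespective of the value of $\widetilde{\mu}$. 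I do not anticipate any genuine obstacle here: the entire content of the theorem is the identification of ``the operations are well-defined'' with ``$\mathcal{C}$ is convex'', together with this single bookkeeping identity for $\overline{\,\cdot\,}$.
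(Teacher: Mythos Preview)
Your proposal is correct and follows exactly the approach of the paper: define $cc_\lambda(x,y)\equiv\lambda x+\overline{\lambda}y$ and verify the axioms\re{unitlaw}--\re{defparaass} from the vector space structure. The paper's own proof is a one-liner that declares this ``clear'' and ``easy''; you have simply written out the verification in full, including the bookkeeping for deformed parametric associativity, so your argument is a faithful expansion of the paper's.
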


\begin{proof}
This is clear by defining the convex combination operations $cc_\lambda$ via the vector space structure in the obvious way as
\beq
cc_\lambda(x,y)\equiv\lambda x+\overline{\lambda}y
\eeq
since then the equations\re{unitlaw}--(\ref{defparaass}) follow easily from the vector space axioms.
\end{proof}

The map which turns every such convex set into a convex space is functorial in the following sense: consider the category of convex sets, where objects are pairs $(V,\mathcal{C})$ with $V$ a real vector space and $\mathcal{C}\subseteq V$ a convex subset, and the morphisms $(V,\mathcal{C})\ra(V',\mathcal{C}')$ are the affine maps $f:V\ra V'$ with $f(\mathcal{C})\subseteq\mathcal{C}'$. Then each morphism $f$ restricts to a convex map between convex spaces $f_{|\mathcal{C}}:\mathcal{C}\ra\mathcal{C}'$. This construction is clearly functorial.

All examples following now are convex spaces of geometric type. In each case, we also describe how the convex space arises as a convex subset of a vector space.

\begin{ex}[free convex spaces]
Given a set $X$, the simplex $\Delta_X$ is a convex subset of the vector space $\R^X$. Alternatively, we can regard $\Delta_X$ as the set of formal convex combinations of elements of $X$. In this interpretation, $\Delta_X$ is the ``free'' convex space generated by $X$ in the sense of a functor $\mathtt{Set}\ra\mathtt{ConvSpc}$ left adjoint to the forgetful functor $\mathtt{ConvSpc}\ra\mathtt{Set}$. This property is clear from the monadic definition of convex spaces, where $\Delta_\cdot$ figures as the underlying functor of the monad $\mathscr{G}_\mathrm{fin}$. As a third point of view, $\Delta_X$ can also be regarded as the set of finitely supported probability measures on $X$.
\end{ex}

\begin{ex}[probability measures]
As a variant of the previous example, we may consider a set $X$ together with any $\sigma$-algebra $\Omega\subseteq 2^X$, turning $(X,\Omega)$ into a measurable space. Then the set of probability measures on $(X,\Omega)$ is a convex subset of the vector space $\R^\Omega$. We denote this convex space by $\Delta_{(X,\Omega)}$.
\end{ex}

\begin{ex}[invariant measures]
Let $(X,\Omega)$ be a measurable space together with an action of a group $G$ or monoid $G$ given by a homomorphism $G\ra\mathrm{End}(X)$. For example when $G=(\R,+)$, this action turns $X$ into a dynamical system. Then the set of \emph{invariant measures}, which are those probability measures that are preserved by the action of $G$, form a convex subspace of $\Delta_{(X,\Omega)}$. Of particular importance are the \emph{ergodic measures} as those that cannot be written as a non-trivial convex combination of other invariant measures.
\end{ex}

\begin{ex}[conditional probability distributions / classical communication channels]
Given measurable spaces $(X,\Omega_X)$ and $(Y,\Omega_Y)$, a \emph{conditional probability distribution} on $Y$ dependent on $X$ is defined to be a convex map $\Delta_{(X,\Omega_X)}\ra\Delta_{(Y,\Omega_Y)}$. Such a map describes a classical communication channel, where an input $x\in X$ is represented by the Dirac measure on $x$ and gets mapped to a probability distribution of noise-affected possible outputs $y\in Y$. The set of all such maps is a convex space under pointwise convex combinations. 
\end{ex}

\begin{ex}[states on $C^*$-algebras]
Given a $C^*$-algebra $A$, a \emph{state on} $A$ is a positive linear functional $\phi:A\ra\C$ of unit norm. The states on $A$ form a convex subset of the vector space $\C^A$. In the case $A=\mathcal{B}(\mathcal{H})$, this convex space is isomorphic to the convex set of unit trace positive trace-class operators on $\mathcal{H}$, the so-called \ind{density matrices}{density matrix}. Upon setting $\mathcal{H}_n\equiv\C^n$ for $n\in\N$ and $H_n\equiv\ell^2(\N)$ for $n=\infty$, the set of density matrices is given by
\beq
\mathcal{Q}_n\equiv\left\{\rho\in\mathcal{B}(\mathcal{H}_n)\:|\:\rho\geq 0,\:\mathrm{tr}(\rho)=1\right\}.
\eeq
This family of convex spaces is widely studied in quantum information theory. As a first example of how much information the convex space structure on $\mathcal{Q}_n$ contains, we show that one can use it to recover the scalar product of $\mathcal{H}_n$, at least up to a phase factor. This is achieved by the formula, depending on unit vectors $|\psi_1\rangle$ and $|\psi_2\rangle$,
\beqn
\label{scalarprod}
\sqrt{1-|\langle\psi_1|\psi_2\rangle|^2}=\max_{f:\mathcal{Q}_n\ra[0,1]\textrm{ convex}}\Big|f\left(|\psi_1\rangle\langle\psi_1|\right)-f\left(|\psi_2\rangle\langle\psi_2|\right)\Big|.
\eeqn
In order to prove the correctness of this equation, we consider the case $n=2$ first. Then $|\psi_1\rangle$ and $\psi_2\rangle$ can be identified with points on the Bloch sphere. The angle between these points, as seen from the center of the sphere, is given by
\beq
\cos\alpha=|\langle\psi_1|\psi_2\rangle|^2,\quad\alpha\in[0,\pi]
\eeq
since the map $\rho\mapsto\mathrm{tr}(\rho|\psi_1\rangle\langle\psi_1|)$ is convex and can therefore be identified with a cartesian coordinate for the sphere. This situation is illustrated in figure~\ref{blochsphere}. 

\begin{figure}
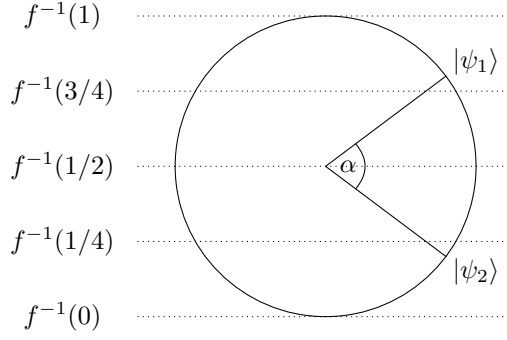

\begin{center}
\beq\xy
(20,0);(0,0),{\ellipse<>{-}};
(16,12) **@{-} +(4,2)*={|\psi_1\rangle},
(16,-12) **@{-} +(4,-2)*={|\psi_2\rangle},
(4,-3);(4,3) **\crv{(6.5,0)},
(3,0)*={\alpha},
(25,20);(-25,20) **@{.} -(10,0)*={f^{-1}(1)},
(25,10);(-25,10) **@{.} -(10,0)*={f^{-1}(3/4)},
(25,0);(-25,0) **@{.} -(10,0)*={f^{-1}(1/2)},
(25,-10);(-25,-10) **@{.} -(10,0)*={f^{-1}(1/4)},
(25,-20);(-25,-20) **@{.} -(10,0)*={f^{-1}(0)},
\endxy\eeq
\end{center}
\caption{A two-dimensional section of the Bloch ball containing the states $|\psi_1\rangle$ and $|\psi_2\rangle$. The level sets of the optimal functional $f$ are shown with pointed lines.}
\label{blochsphere}
\end{figure}

Now when $f$ is a $[0,1]$-valued convex functional on the Bloch ball, the value $|f\left(|\psi_1\rangle\langle\psi_1|\right)-f\left(|\psi_2\rangle\langle\psi_2|\right)|$ is maximal at most when $f$ attains both $0$ and $1$. Then we call $f^{-1}(1)$ the ``north pole'' and $f^{-1}(0)$ the ``south pole''; these points are clearly unique and diametrically opposite. Also it is clear that an optimal $f$ will be such that $|\psi_1\rangle$ and $|\psi_2\rangle$ are aligned symmetrically with respect to the equator. Then,
\beq
f(|\psi_1\rangle\langle\psi_1|)=\frac{1}{2}+\frac{1}{2}\sin\left(\frac{\alpha}{2}\right),\quad f(|\psi_2\rangle\langle\psi_2|)=\frac{1}{2}-\frac{1}{2}\sin\left(\frac{\alpha}{2}\right)
\eeq
so that
\beq
\left|f\left(|\psi_1\rangle\langle\psi_1|\right)-f\left(|\psi_2\rangle\langle\psi_2|\right)\right|=\sin\left(\frac{\alpha}{2}\right)=\sqrt{1-|\langle\psi_1|\psi_2\rangle|^2}\,,
\eeq
as was to be shown.

For general $n$, consider the Hilbert space spanned by $|\psi_1\rangle$ and $|\psi_2\rangle$. When $|\psi_1\rangle$ and $|\psi_2\rangle$ are linearly dependent,\re{scalarprod} holds trivially, hence we may assume the span to be two-dimensional. This yields an embedding $\mathcal{Q}_2\hookrightarrow\mathcal{Q}_n$. In this way, every convex functional $\mathcal{Q}_n\ra[0,1]$ can be restricted to $\mathcal{Q}_2\ra[0,1]$, and then the ``$\geq$'' part of\re{scalarprod} follows from the previous considerations. On the other hand, the $f$ constructed in the two-dimensional case is of the form $\rho\mapsto\langle\psi|\rho|\psi\rangle$, where $|\psi\rangle$ is an appropriate linear combination of $|\psi_1\rangle$ and $|\psi_2\rangle$. Therefore, this optimal $f$ can actually be extended to all of $\mathcal{Q}_n$, so that this ``$\geq$'' bound is in fact tight.
\end{ex}

\begin{ex}[KMS states]
A KMS state is a certain kind of state on a $C^*$-algebra relevant for equilibrium thermodynamics.

In statistical physics, thermal equilibrium of a system with its environment is described by an equilibrium state depending on the temperature. This state is usually given by the canonical ensemble's density matrix $\rho=\mathcal{Z}(\beta)^{-1}e^{-\beta H}$, where $\beta=1/kT$ is the inverse temperature of the system, $H$ stands for its Hamiltonian, and $\mathcal{Z}(\beta)=\mathrm{tr}(e^{-\beta H})$ denotes the partition function. However in some cases, the trace in the definition of $\mathcal{Z}(\beta)$ need not converge, such that the canonical ensemble does not exist. For example in the context of spontaneous symmetry breaking, there is clearly no unique equilibrium state. In these situations, equilibrium thermodynamics has to be phrased in terms of KMS states.

We now describe the notion of KMS state in detail. On the quantum level, a system is described by its $C^*$-algebra of observables $A$ and a one-parameter group of automorphisms $\alpha_t:A\ra A$; typically, this group is given by the Heisenberg picture time evolution $\alpha_t(a)=e^{iHt}ae^{-iHt}$. Then by definition, a state $\varphi:A\ra\C$ is a \emph{Kubo-Martin-Schwinger (KMS) state}~\cite[p. 178]{KM} for inverse temperature $\beta$ if and only if for all $a,b\in A$, there is a continuous function $F_{a,b}(z)$ defined on the strip $0\leq\mathrm{Im}(z)\leq\beta$, and holomorphic on the interior of the strip, such that
\beqn
\label{KMS}
F_{a,b}(t)=\varphi(a\alpha_t(b)),\qquad F_{a,b}(t+i\beta)=\varphi(\alpha_t(b)a).
\eeqn
It is then clear that the KMS states for fixed $\beta$ form a convex subset of the convex space of all states on $A$. As a plausibility check, one may observe that the canonical ensemble $\varphi(a)=\mathcal{Z}(\beta)^{-1}\mathrm{tr}(e^{-\beta H}a)$ is a KMS state whenever the partition function $\mathcal{Z}(\beta)=\mathrm{tr}(e^{-\beta H})$ converges.
\end{ex}

\begin{ex}[unit balls]
Let $\left(E,||\cdot||\right)$ be a normed space. Then the unit ball
\beq
B_1\equiv\{x\in E\:|\:||x||\leq 1\}
\eeq
is a convex space in $E$. Conversely, the convex space $B_1$ determines the norm via
\beq
||x||=\frac{1}{\sup\{r\in\R_{>0}\:|\:rx\in B_1\}}.
\eeq
The same applies to seminorms.
\end{ex}

\begin{ex}[torus actions on symplectic manifolds]
\label{torusaction}
This is material taken from the book~\cite{Aud}.

Let $(M,\omega)$ be a compact connected symplectic manifold together with a collection of Hamiltonian functions $H_1,\ldots,H_n$ such that the $H_i$ pairwise Poisson commute and generate (almost) periodic flows. Then the image of the map
\beq
f:M\ra\R^n,\quad x\mapsto\left(H_1(x),\ldots,H_n(x)\right)
\eeq
is convex.

The proof of this result follows from proposition~\ref{showgenconv} together with the statement that all the level sets $f^{-1}(t),\:t\in\R^n$, are empty or connected. The latter is a deep theorem the proof of which heavily relies on Morse theory.
\end{ex}

\begin{prop}
\label{showgenconv}
Let $X$ be a topological space and $\mathcal{F}$ a collection of functions $f:X\ra\R^{n_f}$ such that
\begin{compactitem}
\item $\mathcal{F}$ is closed under composition with linear projection maps $\:\R^{n_1}\twoheadrightarrow\R^{n_2}$,
\item all level sets $f^{-1}(t)$, $f\in\mathcal{F}$, $t\in\R^{n_f}$, are empty or connected.
\end{compactitem}
Then $\:\mathrm{im}(f)\subseteq\R^{n_f}$ is convex for every $f\in\mathcal{F}$.
\end{prop}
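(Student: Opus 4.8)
The plan is to prove convexity of $C:=\mathrm{im}(f)$ by the most direct route: fix two points $a,b\in C$ and show that the segment joining them lies in $C$. The key idea is to choose the linear projection that collapses exactly the direction $a-b$; then the affine line through $a$ and $b$ becomes a single fibre of a map belonging to $\mathcal{F}$, and the level-set hypothesis forces $f$ to hit that line in a connected --- hence convex --- subset.

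Concretely, I would argue as follows. We may assume $a\neq b$ and write $n:=n_f$; the case $n=0$ is vacuous, so let $n\geq 1$. Pick a surjective linear map $\pi:\R^n\ra\R^{n-1}$ with $\ker\pi=\R\cdot(a-b)$ (for instance the quotient map modulo the line $\R(a-b)$) --- this is one of the linear projections $\R^{n}\twoheadrightarrow\R^{n-1}$ to which the first hypothesis applies, so $g:=\pi\circ f$ again lies in $\mathcal{F}$. Set $y_0:=\pi(a)=\pi(b)$. Then $\pi^{-1}(y_0)$ is precisely the affine line $\ell:=\{\,a+t(b-a)\ :\ t\in\R\,\}$, so the level set $Y:=g^{-1}(y_0)$ equals $f^{-1}(\ell)$. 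By the second hypothesis applied to $g\in\mathcal{F}$, the set $Y$ is empty or connected, and it is nonempty because it contains every $f$-preimage of $a$; hence $Y$ is connected. Therefore $f(Y)=C\cap\ell$ is the continuous image of a connected space lying inside the line $\ell\cong\R$, i.e.\ a connected subset of $\R$, i.e.\ an interval; since it contains both $a$ and $b$, it contains the whole segment $[a,b]$. Thus $[a,b]\subseteq C\cap\ell\subseteq C$, and as $a,b\in C$ were arbitrary, $C$ is convex.

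The one genuinely creative step is the choice of $\pi$: one has to see that killing the direction $a-b$ is what converts connectivity of a \emph{single} level set into the inclusion $[a,b]\subseteq C$, after which everything reduces to the observation that a connected subset of a line is an interval. It is worth flagging that closure of $\mathcal{F}$ under \emph{all} surjective linear maps, not merely coordinate projections, is essential here, since $a-b$ need not point along a coordinate axis --- with coordinate projections alone the proposition would already fail, as the curve $\R\ra\R^2,\ t\mapsto(t,t^3)$ shows.
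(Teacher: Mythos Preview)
Your proof is correct and follows the same strategy as the paper's: project the affine line through $a$ and $b$ to a single point by a surjective linear map, invoke the connectedness hypothesis on the resulting level set of $\pi\circ f\in\mathcal{F}$, and push this forward under $f$ to obtain a connected (hence interval) subset of the line containing both $a$ and $b$. Your argument is simply a more explicit version of the paper's sketch, and the closing remark with the counterexample $t\mapsto(t,t^3)$ is a worthwhile addition.
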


\begin{proof}
(see also~\cite[p. 114]{Aud}.) We need to show that the intersection of $\mathrm{im}(f)$ with every affine line in $\R^{n_f}$ is connected. To this end, choose such an affine line and some linear projection $\pi:\R^{n_f}\twoheadrightarrow\R^{n_f-1}$ that maps this affine line to a point. The inverse image of this point under $\pi$ is just the given affine line. Then by assumption, the preimage of this affine line in $X$ has to be connected, therefore showing that the intersection of $\mathrm{im}(f)$ with this affine line also is connected.
\end{proof}

The statement of the next example can be proven by applying a certain refinement of example~\ref{torusaction}. We refer to~\cite[IV.4.11]{Aud} for more details.

\begin{ex}[the Schur-Horn theorem]
Consider an $n$-tuple of not necessarily distinct numbers $\lambda_1,\ldots,\lambda_n\in\R$. Then there is a hermitian $n\times n$-matrix $A$ with $\mathrm{diag}(A)=\left(a_1,\ldots,a_n\right)\in\R^n$ and eigenvalues $\lambda_1,\ldots,\lambda_n$ if and only if
\beq
\left(a_1,\ldots,a_n\right)\in\mathrm{conv}\left(\left\{\,(\lambda_{\sigma(1)},\ldots,\lambda_{\sigma(n)}),\:\sigma\in S_n\right\}\right)
\eeq
where $\mathrm{conv}(\cdot)$ stands for the convex hull in $\R^n$ of its argument and $S_n$ is the group of permutations of $[n]$.
\end{ex}

\begin{ex}[metrics]
These are actually two related examples. For the first, let $X$ be a set. A metric on $X$ is a function $d:X\times X\lra\R_{\geq 0}$ satisfying definiteness, symmetry, and the triangle inequality. A convex combination of two metrics is again a metric. Therefore, the set of metrics is a convex space of geometric type lying in the vector space $\R^{X\times X}$.

For the second example, consider a manifold $M$ and the set of Riemannian metrics on $M$. A Riemannian metric is a positive definite symmetric tensor of rank $(0,2)$ on $M$. Therefore, the set of Riemannian metrics is a convex space of geometric type lying the vector space $\mathcal{T}^0_2(M)$ of all rank $(0,2)$ tensors on $M$.
\end{ex}

\begin{ex}[non-example: points on a Riemannian manifold]
Take $\mathcal{C}$ to be a subset of a Riemannian manifold, such that each pair of points $x,y\in\mathcal{C}$ can be joined by a unique geodesic $[x,y]\subseteq\mathcal{C}$. Upon fixing the affine parameter $\lambda$ of the geodesic $[a,b]$ such that $\lambda=0$ at $y$ and $\lambda=1$ at $x$, one might be tempted to define the convex combination $\lambda x+\overline{\lambda}y$ as the point on $[a,b]$ corresponding to the affine parameter $\lambda$. Then this satisfies the unit law, idempotency and parametric commutativity. Now assume that deformed parametric associativity also holds, thereby turning $\mathcal{C}$ into a convex space. Then any triple of points $x,y,z\in\mathcal{C}$ defines a convex map $\Delta_3\ra\mathcal{C}$ that maps straight lines to geodesics. But then by virtue of the geodesic deviation equation, the manifold is flat along the triangle spanned by $x$, $y$ and $z$. Since this triple was arbitrary, the manifold is flat on all of $\mathcal{C}$. Conversely if the manifold is flat on $\mathcal{C}$, we are exactly in the situation of theorem~\ref{convexsubset}.
\end{ex}

\begin{ex}[color perception and chromaticity]
The physical color of light is given by its spectral density $I(\lambda)$, where $I(\lambda)d\lambda$ is the intensity of light in the wavelength interval $[\lambda,\lambda+d\lambda]$. Hence a priori, there are infinitely many physical degrees of freedom in the spectrum. However since the human eye only has three different kinds of receptors, our perception projects this two a three-dimensional space, which we perceive as three different kinds of visual colors.

\begin{figure}
\begin{center}
\includegraphics[height=6cm]{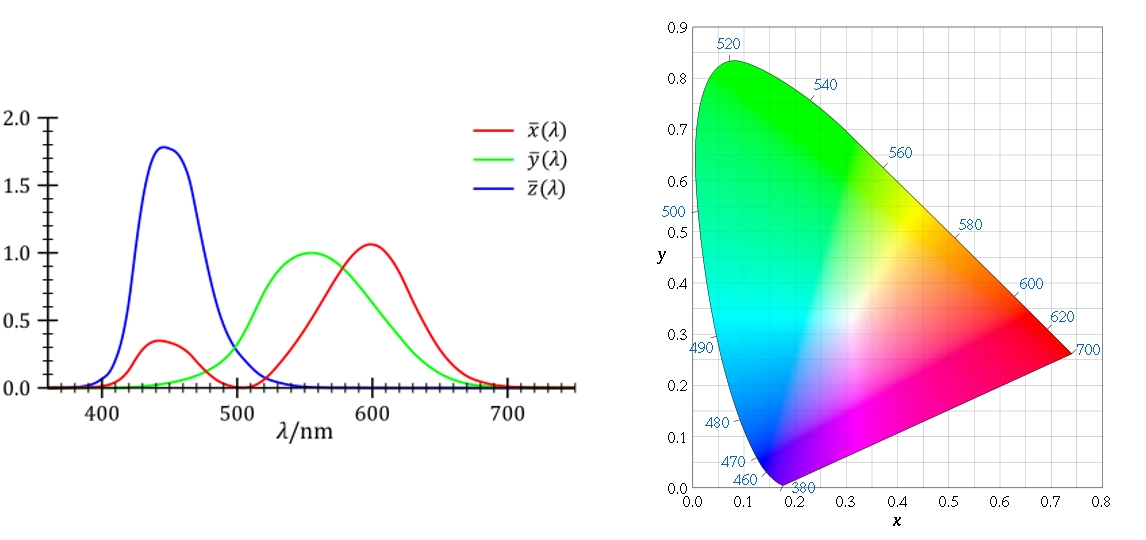}
\end{center}
\caption{The CIE 1931 color matching functions and the resulting chromaticity diagram\protect\footnotemark. The curved part of the boundary is formed by the monochromatic colors of the specified wavelengths.}
\label{colorperception}
\end{figure}

\footnotetext{Both images were copied from \url{http://en.wikipedia.org/wiki/CIE_1931_color_space} using the GNU FDL.}

More formally, a physical color is defined by a finite measure $d\mu$ on the space of wavelengths $[0,\infty)$. The corresponding visual color is obtained by integrating $d\mu$ with respect to three non-negative \emph{color matching functions}\footnote{Note that for technical reasons, these do actually not coincide with the response functions of the eye's receptors.} $\overline{x}(\lambda)$, $\overline{y}(\lambda)$, $\overline{z}(\lambda)$:
\beqa
&X=\int \overline{x}(\lambda)d\mu\\
&Y=\int \overline{y}(\lambda)d\mu\\
&Z=\int \overline{z}(\lambda)d\mu.
\eeqa
Hence we get a convex map from the convex space of all finite measures on $[0,\infty)$ to the convex space $\R_{\geq 0}^3$, such that scaling the measure by a non-negative constant also scales all $(X,Y,Z)$ by that constant. The chromaticity diagram in figure~\ref{colorperception} depicts the image of this convex map in a two-dimensional cross-section which corresponds to restricting to colors of specified brightness. Since the image of any convex map is convex, so is the color region of the chromaticity diagram. Morally speaking, we can think of any physical color $d\mu$ as a free convex combination of monochromatic colors, i.e. Dirac measures on $[0,\infty)$. Then every visual color in the chromaticity diagram is a convex combination of monochromatic colors.
\end{ex}

Convex sets also feature prominently in many kinds of optimization problems. We start with a particular example of a linear programming problem.

\begin{ex}[static friction for rigid bodies]
Consider a long and thin rod with quadratic cross-section lying on a flat surface. Then upon application of a small force along the side of the rod, the static friction between the rod and the surface keeps the rod from sliding. The question is: under the assumption that the force applies on the side of the rod towards its end, how big can that force be without the rod starting to slide? The situation is illustrated in figure~\ref{staticfriction}.

We assume all physical parameters (mass and length of the rod, coefficient of friction, \ldots) to be known and set them to unity without loss of generality. Then as shown in the figure, the friction forces along the rod are described in terms of a linear density $f(x)$ with the constraint that there is a maxmial amount of friction for each length element, so that $|f(x)|\leq 1$. Now upon application of a small enough force $\vec{F}$, the friction will adjust in such a way that the force is balanced, i.e. $\vec{F}+\vec{e}_y\int_0^1f(x)dx=0$, and torque is balanced, i.e. $\int_0^1xf(x)dx=0$. Hence the maximal force that can be applied is given by the solution of the linear program
\beqa
&-1\leq f(x)\leq +1\\
&\int_0^1xf(x)dx=0\\
&\max\left(\int_0^1f(x)dx\right)
\eeqa
As always in linear programming, the set of admissible solutions $f(x)$ is determined by a set of linear equalities and inequalities, and therefore is convex. We can solve this problem by introducing a Lagrange multiplier $\mu$ for the equality constraint, and solving the optimization problem
\beqa
&-1\leq f(x)\leq +1\\
&\max\left(\int_0^1f(x)dx+\mu\int_0^1xf(x)dx\right)=\max\left(\int_0^1\left(\mu x+1\right)f(x)dx\right)
\eeqa
It is clear this problem has a unique optimal solution given by
\beq
f^*_\lambda(x)=\left\{\begin{array}{cc}+1&\textrm{for }\:\mu x+1>0\\-1&\textrm{for }\:\mu x-1<0\end{array}\right..
\eeq
Then the torque constraint $\int_0^1xf(x)dx=0$ holds if and only if $\mu=-\sqrt{2}$, so that the optimal configuration is given by
\beq
f^*(x)=\left\{\begin{array}{cc}+1&\textrm{for }\: x<1/\sqrt{2}\\-1&\textrm{for }\: x>1\sqrt{2}\end{array}\right..
\eeq
With this result, we determine the absolute value of the maximal force to be 
\beq
F=\int_0^1f^*(x)dx=\sqrt{2}-1.
\eeq

We expect that these considerations can be generalized to arbitrary rigid bodies in $\R^n$. To this end, $f$ will have to be replaced by a vector-valued function $\vec{f}(x)$ restricted such as $|\vec{f}(x)|\leq\rho(x)$, where $\rho$ is the rigid body's density distribution, while there will be one linear constraint for each component of the total torque. Then the set of admissible $\vec{f}(x)$ is a convex space that comes with a convex map to the vector space of all potential forces acting on a certain point of the rigid body. The forces that can be applied at that point without the body starting to slide are exactly given by the image of this convex map.
\end{ex}

\begin{figure}
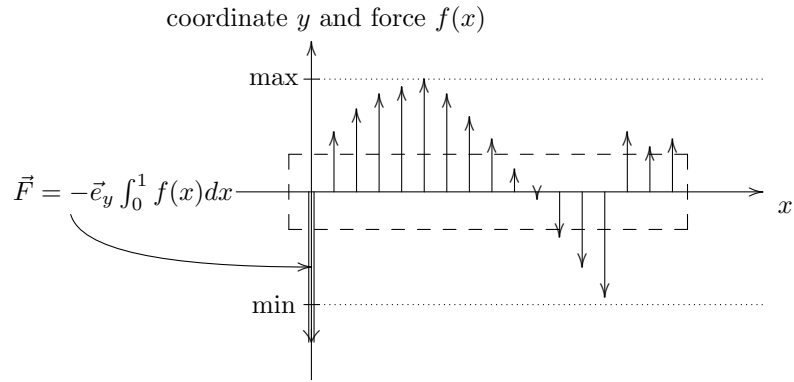

\begin{center}
\beq\xy
(-10,0);(60,0) **@{-}?>*@{>},
+(3,-2)*={x},
(0,-25);(0,20) **@{-}?>*@{>},
+(2,3)*={\textrm{coordinate } y\textrm{ and force }f(x)},
(-3,5);(-3,-5) **@{--}
;(50,-5) **@{--}
;(50,5) **@{--}
;(-3,5) **@{--},
(1,15);(-1,15) **@{-} -(4,0)*={\max}, (60,15) **@{.},
(1,-15);(-1,-15) **@{-} -(4,0)*={\min}, (60,-15) **@{.},
(3,0);(3,8) **@{-}?>*@{>},
(6,0);(6,11) **@{-}?>*@{>},
(9,0);(9,13) **@{-}?>*@{>},
(12,0);(12,14) **@{-}?>*@{>},
(15,0);(15,15) **@{-}?>*@{>},
(18,0);(18,13) **@{-}?>*@{>},
(21,0);(21,10) **@{-}?>*@{>},
(24,0);(24,7) **@{-}?>*@{>},
(27,0);(27,3) **@{-}?>*@{>},
(30,0);(30,-1) **@{-}?>*@{>},
(33,0);(33,-6) **@{-}?>*@{>},
(36,0);(36,-10) **@{-}?>*@{>},
(39,0);(39,-14) **@{-}?>*@{>},
(42,0);(42,8) **@{-}?>*@{>},
(45,0);(45,6) **@{-}?>*@{>},
(48,0);(48,7) **@{-}?>*@{>},
(0,0);(0,-20) **@{=}?>*\dir3{>},
(-25,0)*={\vec{F}=-\vec{e}_y\int_0^1f(x)dx} -(7,3);(0,-10) **\crv{(-30,-10)}?>*@{>}
\endxy\eeq
\end{center}
\caption{Candidate distribution of static friction along a thin rod upon application of the force $\vec{F}$. The dashed lines indicate the contour of the rod as seen from above.}
\label{staticfriction}
\end{figure}

Introducing a Lagrange multiplier as above is a special case of duality theory for linear programs. Hence the following question arises: when formulating convex programming in the context of convex spaces, is there a nice notion of duality that generalizes the classical Karush-Kuhn-Tucker theory? What are appropriate constraint qualifications guaranteeing strong duality?

Since linear programming is a relatively easy optimization problem, one tries to reduce other optimization problems to the linear case. This is done for combinatorial optimization problems in particular, and hence convex spaces might also be of relevance for those.

\begin{ex}[combinatorial optimization]
For us, a combinatorial optimization problem is given by a finite set $X=\left\{x_1,\ldots,x_n\right\}$ (the search space) and a linear subspace
\beq
\mathcal{OF}\subseteq\R^X
\eeq
that is the class of all possible \emph{objective functions}. A particular \emph{instance} of the problem is then given by specifying some $f\in\mathcal{OF}$, and the task is to find the optimal value
\beq
\max_{i=1,\ldots,n}f(x_i)=\mathrm{?}\,.
\eeq
Typically, $n$ is so large that brute-force enumeration of the search space is practically impossible, and therefore one needs to exploit the structure of $\mathcal{OF}$ as the way it lies inside $\R^X$.

For example, let $X$ be the set of all Hamiltonian cycles in a finite graph $G$, and $\mathcal{OF}$ the set of all functions on $X$ which one obtains by assigning a weight to each edge of $G$ and mapping a Hamiltonian cycle to the sum of its edge weights. In this way, one obtains the famous \emph{travelling salesman problem} on $G$.

Since all that matters is how a candidate point $x_i$ behaves under objective functions, we can identify $x_i$ with the evaluation map
\beq
x_i:\mathcal{OF}\ra\R,\quad f\mapsto f(x_i).
\eeq
In this way, $X$ becomes identified with a finite subset of $\R^\mathcal{OF}$. Now consider the polytope
\beq
\mathcal{P}\equiv\mathrm{conv}\left(\left\{x_1,\ldots,x_n\right\}\right)\subseteq\R^\mathcal{OF}.
\eeq
Then each $f\in\mathcal{OF}$ turns into a convex map $f:\mathcal{P}\ra\R$. In practice, one tries to describe $\mathcal{P}$ in terms of linear inequalities, which reduces the combinatorial optimization problem to a linear optimization problem. For example in case of the travelling salesman problem, $\mathcal{P}$ is the \emph{travelling salesman polytope} over $G$.
\end{ex}

It is clear that no list of relevant examples of convex sets could ever be complete. Therefore we simply end this list here by mentioning some particularly severe omissions:\\

\begin{compactitem}
\item Polytopes in general~\cite{Zie} as a certain kind of finitely generated convex spaces.
\item In particular, lattice polytopes and their relation to toric varieties~\cite{Ful}.
\item The geometry of numbers~\cite{Sie} studying integer points (potentially over number fields) in convex subsets of $\R^n$.
\item The Bernstein-Kushnirenko theorem expressing the generic number of non-trivial solutions to a system of polynomial equations in terms of a geometric invariant of a collection of polytopes~\cite{Stu}.
\item The set of Bayesian networks on a fixed directed acyclic graph~\cite{Mad}.
\end{compactitem}

\section{Convex spaces of combinatorial type}
\label{examplescomb}

Now we turn to convex spaces that cannot be embedded as convex subsets of vector spaces. The smallest of these is a convex space structure on a two-element set.

\begin{ex}[two-point convex space]
\label{faceclassifier}
Let $\mathcal{FC}=\{i,f\}$ be a two-element set, and define convex combinations of the two elements as
\beq
\label{twoelemconvspc}
\lambda i+\overline{\lambda}f\equiv\left\{\begin{array}{cc}f&\textrm{if }\lambda=0\\i&\textrm{if }\lambda\neq 0\end{array}\right.
\eeq
This satisfies all the axioms for a convex space.
\end{ex}

Naively, one would deem the previous example pathological. Earlier on in the study of convex spaces, we were also trying to exclude such cases by changing the definition of convex space by requiring $\mathcal{C}$ to be a topological space and the convex combination operations to be continuous. However, we soon found out that example~\ref{twoelemconvspc} is just a special case of a very natural class of convex spaces of combinatorial type, which should not be considered pathological at all. One reason is that $\mathcal{FC}$ from the previous example turns out to be the $\mathcal{F}$ace $\,\mathcal{C}$lassifier for convex spaces~\cite{propclass}, with $f$ representing a $f$ace and $i$ the $i$nterior complement. Another reason is remark~\ref{probposs}.

\begin{defn}
A convex space $\mathcal{C}$ is said to be of \ind{combinatorial type}{combinatorial type} if each function
\beq
(0,1)\lra\mathcal{C},\quad\lambda\mapsto \lambda x+\overline{\lambda}y
\eeq
is constant.
\end{defn}

Then when combining this definition with the axioms\re{unitlaw}--\ref{defparaass}, we see that a convex space of combinatorial type is nothing but a set $\mathcal{C}$ together with a binary operation
\beq
cc_{\frac{1}{2}}:\mathcal{C}\times\mathcal{C}\lra\mathcal{C}
\eeq
which is idempotent, commutative and associative. It is well-known that such an algebraic structure is exactly the same thing as a meet-semilattice, which is a poset $(\mathcal{C},\leq)$ such that each pair of elements has a \ind{meet}{meet}, i.e. a greatest lower bound. In the following, the term \ind{semilattice}{semilattice} always stands for \ind{meet-semilattice}{meet-semilattice}.

We digress briefly to describe the monad and the Lawvere theory underlying semilattices. The monad is a version of the \ind{powerset monad}{powerset monad} (or \ind{Manes monad}{Manes monad}) and is defined over the functor that maps every set to the set of its finite nonempty subsets.

\begin{defn}[the finitary Manes monad]
The finitary Manes monad $\mathscr{M}_\mathrm{fin}\equiv(\mathcal{P}_\mathrm{fin},\varepsilon,\omega)$ is given by the functor
\beq
\mathcal{P}:\mathtt{Set}\ra\mathtt{Set},\quad A\mapsto\mathcal{P}(A)\equiv\{B\subseteq A\:|\:B\neq\emptyset\textrm{ is finite}\}
\eeq
with the obvious action on morphims, the unit natural transformation
\beq
\varepsilon_A:A\ra\mathcal{P}A,\quad x\mapsto\{x\}
\eeq
and the multiplication transformation
\beq
\omega_A:\mathcal{P}\mathcal{P}A\ra\mathcal{P}A,\quad C\mapsto\bigcup_{B\in C}B.
\eeq
\end{defn}

The Lawvere theory of semilattices is the category $\mathtt{FinMultiMap}$ of finite cardinals together with multivalued functions.

We can now see how both the monad and the Lawvere theory underlying convex spaces of combinatorial type are related to $\mathscr{G}_\mathrm{fin}$ and $\mathtt{FinStoMap}$. To this end, consider the semiring $\mathcal{S}_2\equiv\{0,1\}$ with $1+1\equiv 1$. Then the monad $\mathscr{M}_\mathrm{fin}$ originates from $\mathscr{G}_\mathrm{fin}$ by replacing the $\R_{\geq 0}$-coefficients of $\mathscr{G}_\mathrm{fin}$ by $\mathcal{S}_2$-coefficients. In the same way, $\mathtt{FinMultiMap}$ originates from $\mathtt{FinStoMap}$ by making the same change of coefficients: a multivalued function $[m]\ra[n]$ is the same thing as a matrix $M_{n\times m}(\mathcal{S}_2)$ that is ``stochastic'' in the sense that all coefficients sum to $1$.

More formally, changing coefficients along the semiring homomorphism
\beq
\R_{\geq 0}\ra\mathcal{S}_2,\quad\lambda\mapsto\mathrm{sgn}(\lambda)
\eeq
yields a morphism of Lawvere theories $\mathtt{FinStoMap}^\mathrm{op}\ra\mathtt{FinMultiMap}^\mathrm{op}$ and a morphism of monads $\mathscr{G}_\mathrm{fin}\ra\mathscr{M}_\mathrm{fin}$ given by
\beq
\Delta_X\lra\mathcal{P}(X),\quad\sum_{i\textrm{ with }\lambda_i>0}\lambda_ix_i\mapsto\left\{x_1,\ldots,x_n\right\}
\eeq
These morphisms imply that a semilattice naturally carries a convex space structure.

\begin{rem}
\label{probposs}
What does this change of coefficients mean in the information-theoretic interpretation of convex spaces? The answer is that $\mathcal{S}_2$ coefficients only care about qualitative \emph{possibilities}, while $\R_{\geq 0}$ coefficients contain information about quantitative \emph{probabilities}. 
\end{rem}

We now give a few examples of semilattices.

\begin{ex}[free semilattices]
Given a set $X$, the free semilattice over $X$ is given by $\mathcal{C}\equiv\mathcal{P}(X)$ together with the partial order
\beq
A,B\in\mathcal{P}(X):\quad A\leq B\:\Longleftrightarrow A\supseteq B.
\eeq
Then the meet of two finite non-empty subsets of $X$ is given by their union.
\end{ex}

\begin{ex}[possibility measures]
Given a measurable space $(X,\Omega)$, a \ind{possibility measure}{possibility measure} on $(X,\Omega)$ is a map $\mu:\Omega\lra [0,1]$ such that $\mu(\emptyset)=0$, $\mu(X)=1$ and
\beq
\mu\left(\bigcup_{i\in\N}X_i\right)=\sup_{i\in\N}\mu\left(X_i\right)
\eeq
for every countable family of subsets $X_i\in\Omega$.

Intuitively, $\mu$ measures the plausibility an observer assigns to an event. A possibility of $0$ means that the event is impossible. On the other hand, a possibility of $1$ means that the event is totally unsurprising, although it need not occur with absolute certainty.

The set of possibility measures on $(X,\Omega)$ is a semilattice with respect to the ordering
\beq
\mu\leq\mu'\:\Longleftrightarrow \mu(Y)\leq\mu'(Y)\:\:\forall Y\in\Omega.
\eeq
The meet operation is given by
\beq
(\mu_1\land\mu_2)(Y)=\min\left\{\mu_1(Y),\mu_2(Y)\right\}.
\eeq
\end{ex}

\begin{ex}
\label{primes}
Consider $\mathcal{C}=\N$ as a partially ordered set with respect to divisibility:
\beq
x\leq y\:\Longleftrightarrow x|y
\eeq
Then the meet of two natural numbers is given by their greatest common divisor. Hence, $(\N,|)$ is a semilattice which encodes some number-theoretic information.

On the other hand, the decomposition of an integer into its prime factors yields an isomorphism of partially ordered sets $(\N,|)\cong\N^{\times\mathbb{P}}$, where $\P$ denotes the set of prime numbers, and $\N^{\times\mathbb{P}}$ carries the product order. This means that there is nothing to gain from studying the semilattice $(\N,|)$ by itself without any additional structure.
\end{ex}

\section{Convex spaces of mixed type}
\label{examplesmixed}

The above two types of convex spaces should be considered to be extreme cases. In general, a convex space will have a flavor of both the geometrical type and the combinatorial type. For example when starting with a convex space of geometrical type, the following construction will add a combinatorial flavor:

\begin{ex}[adjoining a point at infinity]
\label{adjoininfty}
Let $\mathcal{C}$ be any convex space. Then we define a new convex space as $\mathcal{C}_\infty\equiv\mathcal{C}\cup\{\infty\}$, where the convex combinations are inherited from $\mathcal{C}$ together with, for all points $x\in\mathcal{C}$,
\beq
\lambda\infty+\overline{\lambda}x\equiv\left\{\begin{array}{cc}x&\textrm{for }\lambda=0\\\infty&\textrm{for }\lambda\neq 0\end{array}\right.
\eeq
\end{ex}

There is much more general construction lying behind this example: starting with a semilattice $\mathcal{S}$, we choose a convex space $\mathcal{C}_s$ for each $s\in\mathcal{S}$. The $\mathcal{C}_s$ may be of geometric type, but this is not required. Now we consider the disjoint union
\beq
\mathcal{C}\equiv\bigcup_{s\in\mathcal{S}}\mathcal{C}_s.
\eeq 
Hence, $\mathcal{C}$ is a set over $\mathcal{S}$ with fibers $\mathcal{C}_s$. Furthermore, for every relation $s\leq s'$, we choose a convex map $f_{s,s'}:\mathcal{C}_{s'}\lra\mathcal{C}_s$, such that this data amounts to a functor
\beq
f_{\cdot,\cdot}:\mathcal{S}^{\mathrm{op}}\lra\mathtt{ConvSpc},\quad s\mapsto\mathcal{C}_s,\quad \left(s\leq s'\right)\mapsto f_{s,s'}
\eeq
where the poset $\mathcal{S}$ is considered as a category in the usual way. Now we can define convex combinations on $\mathcal{C}$ as
\beq
\lambda\in(0,1),\,x\in\mathcal{C}_s,\,y\in\mathcal{C}_t:\quad\lambda x+\overline{\lambda}y\equiv\lambda f_{s\land t,s}(x)+\overline{\lambda}f_{s\land t,t}(y)\: \in\mathcal{C}_{s\land t}.
\eeq
Intuitively speaking: for taking a non-trivial convex combination of some point in $\mathcal{C}_s$ and some point in $\mathcal{C}_t$, we have to transport both of them to $\mathcal{C}_{s\land t}$ first and then can take the convex combination there. We denote the resulting convex space by $\mathcal{C}=\mathcal{S}{}_f\ltimes\mathcal{C}_\cdot$.

Example~\ref{adjoininfty} is subsumed by this construction upon setting $\mathcal{S}\equiv\mathcal{FC}$ (from example~\ref{faceclassifier}), $\mathcal{C}_f\equiv\mathcal{C}$ and $\mathcal{C}_i\equiv\{\infty\}$. The map $f_{f,i}:\mathcal{C}\ra\{\infty\}$ is trivially unique.

\begin{ex}[a lottery]
\label{lotteryex}
Suppose we buy a ticket for a lottery. Also suppose that we do not really care about what the prizes are, as long as we win \emph{something}; hence before the results are drawn, we only care about our subjective probability of winning $p\in[0,1]$. But then as soon as we know that we have a winning ticket (i.e. $p=1$), of course we also become interested in what the prize actually is -- the possibilities being, say, an apple $a$ or a banana $b$. Hence in this stage of the process, our subjective state of information is given by an element of $\Delta_{\{a,b\}}$. In total, our possible states of subjective information are given by the convex space
\beq
[0,1)\cup\Delta_{\{a,b\}}
\eeq
where convex combinations within $[0,1)$ or within $\Delta_{\{a,b\}}$ are the ordinary ones, while in addition, for a coefficient $\lambda\in(0,1)$ and a point $p\in[0,1)$,
\beq
\lambda p+\overline{\lambda}\left(\mu a+\overline{\mu}b\right)\equiv\lambda p+\overline{\lambda}.
\eeq
Intuitively speaking, $\Delta_{\{a,b\}}$ acts on $[0,1)$ by convex combinations with $1$. As illustrated in figure~\ref{lotteryquotient}, one can view this convex space as the quotient of $\Delta_{\{p=0,a,b\}}$ where all formal convex combinations with fixed positive coefficient of $p=0$ are identified.

Since $1\notin[0,1)$, this convex space is not of the form $\mathcal{S}{}_f\ltimes\mathcal{C}_\cdot$ for any $\mathcal{S}$ and $\mathcal{C}_\cdot$.
\end{ex}

\begin{figure}
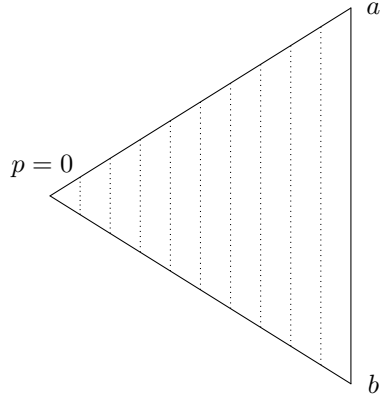

\begin{center}
\beq\xy
(0,0);(40,25) **@{-};
(40,-25) **@{-};
(0,0) **@{-},
(-1,4)*={p=0},
(43,25)*={a},
(43,-25)*={b},
(4,2.5);(4,-2.5) **@{.},
(8,5);(8,-5) **@{.},
(12,7.5);(12,-7.5) **@{.},
(16,10);(16,-10) **@{.},
(20,12.5);(20,-12.5) **@{.},
(24,15);(24,-15) **@{.},
(28,17.5);(28,-17.5) **@{.},
(32,20);(32,-20) **@{.},
(36,22.5);(36,-22.5) **@{.},
\endxy\eeq
\end{center}
\caption{The convex space from example~\ref{lotteryex}. All points on a dotted line are identified, while the points on the line connecting $a$ to $b$ stay distinct.}
\label{lotteryquotient}
\end{figure}

\begin{ex}[convex space of convex sets]
Let $V$ be a real vector space, and take $\mathcal{C}$ to be the set of all convex subsets of $V$:
\beq
\mathcal{C}\equiv\left\{C\subseteq V\:|\:C\textrm{ is convex}\right\}
\eeq
Then convex combinations of two convex subsets $C_1$ and $C_2$ can be defined by
\beq
\lambda C_1+\overline{\lambda}C_2\equiv\left\{\lambda c_1+\overline{\lambda}c_2,\: c_i\in C_i\right\}.
\eeq
Except in the degenerate case $V=0$, this convex space is neither of geometric type nor of combinatorial type. For example when $V=\R$, we can use open and closed intervals to get relations of the form
\beq
\frac{1}{2}(0,1)+\frac{1}{2}[0,1]=(0,1),
\eeq
which cannot possibly hold in a convex space of geometric type. Similar examples abound in higher dimensions.

When considering only those subsets $C\subseteq V$ that are the convex hulls of finitely many points, we obtain the \emph{convex space of polytopes in $V$}. It is unclear whether this convex space is of geometric type.
\end{ex}

\bibliographystyle{halpha}
\bibliography{../convex_spaces}

\vspace{12pt}

\end{document}